\documentclass[11pt,a4paper,reqno]{amsart}

\usepackage{amsfonts}
\usepackage{graphicx}
\usepackage{graphics}
\usepackage{anysize}
\usepackage{wrapfig}
\usepackage{float}
\usepackage{placeins}
\marginsize{3cm}{3cm}{3cm}{3cm}
\newtheorem{theorem}{Theorem}[section]

\newtheorem{lemma}{Lemma}[section]

\newtheorem{corollary}{Corollary}[section]
\newtheorem{definition}{Definition}[section]
\newtheorem{example}{Example}[section]

\makeatletter

\begin{document}

\begin{flushleft}
\end{flushleft}
\setcounter{page}{1}
\vspace{2cm}

\title {The Diametral Metric Dimension of Generalized Corona Graph  \\ \footnotesize  }
\author{A. N. Fitria$^1$, L. Susilowati$^{1*}$, Y. Wahyuni$^1$, N. H. Sarmin$^2$, \,\S }
\thanks{\noindent$^1$ Department of Mathematics, Faculty of Science and Technology, Universitas Airlangga, Surabaya, Indonesia\\
\indent \,\,\, e-mail: anis.nur.fitria-2025@fst.unair.ac.id; ORCID: https://orcid.org/0009-0009-7394-2860.\\
\indent \,\,\, e-mail: liliek-s@fst.unair.ac.id; ORCID: https://orcid.org/0000-0002-9149-3570.\\
\indent \,\,\, e-mail: yayuk-w@fst.unair.ac.id; ORCID: https://orcid.org/0009-0009-8782-243X.\\
\indent$^2$ Department of Mathematical Sciences, Faculty of Science, Universiti Teknologi Malaysia, Johor Bahru, Malaysia\\
\indent \,\,\, e-mail: nhs@utm.my; ORCID: https://orcid.org/0000-0003-4291-5746.\\
\indent  $^*$\,\,Corresponding author\\ 
\indent \,\,\, The research was supported by Airlangga Research Fund (ARF) year 2026 Contract No. 1819/B/DST/UN3.DRI/PT.01.03/2026.}

\begin{abstract}
This study investigates the diametral metric dimension of generalized corona graphs, where the main graph is connected graph and the branch graphs form a sequence of connected graphs. The concept of diametral metric dimension is an extension of the metric dimension concept, requiring the resolving set to contain all diametral vertices. To determine the diametral metric dimension of a generalized corona graph, one must first identify the distance of each vertex in the graph and the graph's diametral set. Subsequently, a resolving set containing the diametral set is determined. The results show that the diametral metric dimension of the generalized corona graph depends on the diameter of the main graph and the metric dimensions of the graphs in the sequence. These concepts provide theoretical insights into resolving structures in the planning infrastructure and motivate further studies on other graph families and graph operations.
\par
\bigskip \noindent Keywords: metric dimension, diametral metric dimension, diametral set, corona graph, planning infrastructure.
\par
\bigskip \noindent AMS Subject Classification: 05C12, 05C75, 05C76.
 
\end{abstract}
\maketitle 
\bigskip
\bigskip

\section{Introduction}
Graph theory is a branch of discrete mathematics that studies mathematical structures consisting of vertices and edges, where edges represent relationships between pairs of vertices. Since the work of Leonhard Euler on the Seven Bridges of K\"onigsberg problem, graph theory has developed into an important area of mathematical research with applications in transportation, communication networks, optimization, and network design \cite{chartrand2012}.

Among the fundamental distance-based concepts in graph theory are distance, eccentricity, diameter, and radius. For two vertices $v,w\in V(G)$, the distance between $v$ and $w$, denoted by $d_G(v,w)$ or simply by $d(v,w)$, is defined as the length of a shortest path between $v$ and $w$ in $G$. Based on the concept of distance, the eccentricity of a vertex, as well as the diameter and radius of a graph, can be defined. These concepts have led to the development of various graph parameters, one of which is the metric dimension. The metric dimension was introduced independently by Slater in 1975 and by Harary and Melter in 1976 \cite{slater1975,harary1976}. Since then, metric dimension has been extensively studied for various graph classes and graph operations.

The significance of this study lies in the development of a concept in graph theory that not only enriches the study of metric dimension but also opens opportunities for further research in network structure analysis. The development of the diametral metric dimension is expected to broaden the theoretical foundation for analyzing network structures, particularly in identifying important vertices based on the distance structure and diameter of a graph.

A related direction in the study of the metric dimension is obtained by requiring a resolving set that contains the central set, this is referred to as the central metric dimension \cite{rodriguez2020}. Susilowati et al. have investigated several variants of metric dimension, including rooted product graphs and edge coronation graphs \cite{susilowati2025a,susilowati2025b}. Similarly, the diametral metric dimension requires a resolving set to contain all diametral vertices, namely the vertices whose eccentricity is equal to the diameter of the graph. 

Graph operations provide an object for investigating how structural modifications affect distance-based parameters. One such operation is the corona product, introduced by Frucht and Harary \cite{frucht1970}. The generalized corona extends this operation by allowing different graphs to be attached to different vertices of the main graph \cite{rodriguez2015}. Consequently, the resulting distance structure, eccentricities, diameter, and diametral vertices may depend on both the main graph and the attached graphs. This makes the generalized corona an interesting graph operation for studying the diametral metric dimension.

Motivated by these observations, this paper investigates the diametral metric dimension of generalized corona graphs $G\odot(H^1,H^2,\ldots,H^n)$. The main graph $G$ is considered from four fundamental graph families, namely the path, cycle, complete, and star graphs. For each graph family, we characterize the diametral vertices of the resulting generalized corona graph and determine its diametral metric dimension. The results provide further insight into the relationship between graph operations, the location of diametral vertices, and resolving sets.

\section{Preliminaries}

Throughout this paper, all graphs are assumed to be connected. Let $G=(V(G),E(G))$ be a graph. The distance between two vertices $u,v\in V(G)$, denoted by $d_G(u,v)$, is the length of a shortest $u$--$v$ path in $G$.

For a vertex $v\in V(G)$, the eccentricity of $v$ is defined by
\begin{equation}
\operatorname{e}_G(v)=\max_{u\in V(G)}d_G(u,v).
\end{equation}
The diameter and radius of $G$ are respectively defined by
\begin{equation}
\operatorname{diam}(G)=\max_{v\in V(G)}\operatorname{e}_G(v)
\end{equation}
and
\begin{equation}
\operatorname{rad}(G)=\min_{v\in V(G)}\operatorname{e}_G(v).
\end{equation}

The diameters of several fundamental graph families are well known.
\begin{lemma} \cite{chartrand2000,harary1969}
Let $G$ be a connected graph of order $n$. The following properties hold:
\begin{enumerate}
    \item $\operatorname {diam}(P_n)=n-1$;
    
    \item $\operatorname{diam}(C_n)=\left\lfloor\frac{n}{2}\right\rfloor$;
    
    \item $\operatorname{diam}(K_n)=1$;
    
    \item $\operatorname{diam}(S_n)=2$.
\end{enumerate}
\end{lemma}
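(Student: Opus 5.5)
Each of the four items will be verified directly from the definitions of eccentricity and diameter, by computing $d(u,v)$ explicitly for every pair of vertices. For each graph, I first exhibit a pair of vertices at the claimed distance, which gives the lower bound $\operatorname{diam}\ge$ value. I then show that no pair is farther apart, which gives the upper bound. Here $S_n$ is taken to be the star with one center and $n-1$ leaves, and we assume $n\ge 3$ so that two leaves exist.

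\textbf{Path.} Label the vertices $v_1,\dots,v_n$ with edges $v_iv_{i+1}$. I will show $d(v_i,v_j)=|i-j|$. The subpath from $v_i$ to $v_j$ gives $d(v_i,v_j)\le |i-j|$. Conversely, every edge changes the index by exactly one, so any $v_i$--$v_j$ walk has length at least $|i-j|$. Hence $d(v_i,v_j)=|i-j|$, the maximum of $|i-j|$ is $n-1$, attained at $(v_1,v_n)$, and $\operatorname{diam}(P_n)=n-1$.

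\textbf{Cycle.} Label the vertices $v_0,\dots,v_{n-1}$ with indices mod $n$, and let $k=|i-j|$. There are exactly two $v_i$--$v_j$ paths along the cycle, of lengths $k$ and $n-k$. I claim every $v_i$--$v_j$ path is one of these two arcs. Any path starting at $v_i$ must leave it in one of the two directions. Since every vertex has degree two, a path cannot turn back without revisiting a vertex, so it continues in that direction until it reaches $v_j$. Hence $d(v_i,v_j)=\min(k,n-k)\le\lfloor n/2\rfloor$. Choosing $k=\lfloor n/2\rfloor$ attains this bound, so $\operatorname{diam}(C_n)=\lfloor n/2\rfloor$. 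This item is the only one requiring any care: the key point is that in a cycle every path is one of the two arcs, and the argument above settles it.

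\textbf{Complete graph.} In $K_n$ with $n\ge 2$, any two distinct vertices are adjacent. Hence every distance between distinct vertices is $1$, and $\operatorname{diam}(K_n)=1$.

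\textbf{Star.} Let $c$ be the center and $\ell_1,\ell_2,\dots$ the leaves. The center is adjacent to every leaf, so $d(c,\ell_i)=1$. Two distinct leaves are not adjacent, and the path $\ell_i c\ell_j$ has length $2$, so $d(\ell_i,\ell_j)=2$. Therefore the largest distance is $2$, and $\operatorname{diam}(S_n)=2$.
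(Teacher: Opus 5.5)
Your proof is correct. The paper gives no proof of this lemma; it states it as a standard fact with citations. So there is no competing argument to compare against, and your direct computation from the definitions supplies what the paper leaves to the references. That computation exhibits a pair of vertices at the claimed distance and then bounds every pairwise distance: $d(v_i,v_j)=|i-j|$ on the path, $d(v_i,v_j)=\min(k,n-k)$ on the cycle, and distances $1$ and $2$ in $K_n$ and the star. Your cycle argument is enough for distances because a shortest walk is a path, and a path on $C_n$ must be one of the two arcs.

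Your write-up also adds side conditions that the statement as printed actually needs. $\operatorname{diam}(K_1)=0$ rather than $1$. $S_2\cong K_2$ has diameter $1$ rather than $2$. $C_n$ only makes sense for $n\ge 3$.

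Your convention that $S_n$ has $n$ vertices, with a center of degree $n-1$, matches the paper's later usage in the star theorems ($\deg(v_1)=n-1$). The value $\operatorname{diam}(S_n)=2$ would in any case hold under the other common convention $S_n\cong K_{1,n}$ for $n\ge 2$, since it only needs at least two leaves.
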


\begin{definition} \cite{marino2015}
Diametral vertices are vertices whose eccentricity is equal to the diameter of the graph.
\end{definition}

\begin{definition} \cite{chartrand2000}
Let $W=\{w_1,w_2,\ldots,w_k\}\subseteq V(G)$ be an ordered set. The representation of a vertex $v\in V(G)$ with respect to $W$ is defined by
\begin{equation}
r(v\mid W)=
\left(d_G(v,w_1),d_G(v,w_2),\ldots,d_G(v,w_k)\right).
\end{equation} An ordered set $W\subseteq V(G)$ is called a resolving set of $G$ if $r(u\mid W)\neq r(v\mid W)$ for every pair of distinct vertices $u,v\in V(G)$. A resolving set of minimum cardinality is called a metric basis, and its cardinality is called the metric dimension of $G$, denoted by $\dim(G)$.
\end{definition}

The following lemma is useful in determining resolving sets.

\begin{lemma} \label{lemma:reprecase1} \cite{susilowati2015}
Let $G$ be a connected graph and let $W\subseteq V(G)$. For every $v_i,v_j\in W$ with $i\neq j$, $r(v_i\mid W)\neq r(v_j\mid W)$.
\end{lemma}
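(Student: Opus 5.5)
The statement claims that for $W\subseteq V(G)$ and distinct $v_i,v_j\in W$, the representations $r(v_i\mid W)$ and $r(v_j\mid W)$ differ. The plan is to compare the two vectors in the coordinate indexed by $v_i$ itself, where a zero must occur for one vertex and cannot occur for the other.

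First, fix the ordering $W=\{w_1,\ldots,w_k\}$ and write $v_i=w_s$ and $v_j=w_t$ with $s\neq t$. This is possible because $v_i$ and $v_j$ are distinct elements of $W$.

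Next, look at the $s$-th coordinate of each representation. For $v_i$ it is $d_G(v_i,w_s)=d_G(v_i,v_i)=0$. For $v_j$ it is $d_G(v_j,v_i)$, which is a positive integer. Indeed, $G$ is connected, so this distance is finite. It is also nonzero, because the distance between two vertices equals $0$ exactly when they coincide, and $v_j\neq v_i$.

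Since the $s$-th coordinates differ, the vectors $r(v_i\mid W)$ and $r(v_j\mid W)$ are different, which proves the lemma.

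There is no real obstacle here. The only point needing care is the indexing: the subscripts $i,j$ in the statement label the vertices themselves and need not match their positions in the ordered set $W$. That is why the argument first relabels them as $w_s$ and $w_t$.
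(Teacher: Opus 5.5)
Your proof is correct: the coordinate indexed by $v_i$ is $0$ in $r(v_i\mid W)$ and equals $d_G(v_j,v_i)>0$ in $r(v_j\mid W)$, which is the standard argument. The paper gives no proof of this lemma and only cites it, but the same zero-coordinate reasoning is what it uses in case (2) of Lemma~\ref{lemma:representation}, so your approach matches the paper's.
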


\begin{lemma} \label{lemma:proofminimumitydom} \cite{susilowati2020}
Let $G$ be a connected graph. If there is no dominant resolving set
of $G$ with cardinality $k$, then any set $W\subseteq V(G)$ with
$|W|<k$ is not a dominant resolving set.
\end{lemma}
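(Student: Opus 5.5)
This lemma is essentially a logical consequence of what "dominant resolving set" means and of the monotonicity of the two defining properties under taking supersets. I will read a dominant resolving set of $G$ as a set $W\subseteq V(G)$ that is both a resolving set and a dominating set of $G$. The plan is to prove the contrapositive: if some $W\subseteq V(G)$ with $|W|<k$ is a dominant resolving set, then $G$ has a dominant resolving set of cardinality exactly $k$. This implicitly requires $k\le |V(G)|$, since for $k>|V(G)|$ there is no $k$-subset at all and the statement as written would fail (take $W=V(G)$). So I will assume $k\le |V(G)|$ throughout, as is clearly intended.

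The key step is to show that both properties are preserved when vertices are added. Let $W$ be a dominant resolving set and let $W'\supseteq W$.

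\emph{Domination.} Every vertex of $V(G)\setminus W'$ lies in $V(G)\setminus W$. Hence it has a neighbour in $W$, and therefore a neighbour in $W'$.

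\emph{Resolving.} Take distinct $u,v\in V(G)$. Since $W$ resolves $G$, there is some $w\in W$ with $d_G(u,w)\neq d_G(v,w)$. Because $w\in W'$, the vectors $r(u\mid W')$ and $r(v\mid W')$ differ in the coordinate indexed by $w$. This holds regardless of how $W'$ is ordered, since reordering only permutes coordinates consistently for all vertices. Lemma~\ref{lemma:reprecase1} covers the case where $u$ or $v$ already lies in $W'$, but it is not actually needed.

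Now suppose $W$ is a dominant resolving set with $|W|<k\le |V(G)|$. Choose any $k-|W|$ vertices of $V(G)\setminus W$, which is possible since $|V(G)\setminus W|\ge k-|W|$. Adding them to $W$ gives a set $W'$ with $|W'|=k$. By the preceding step, $W'$ is a dominant resolving set of cardinality $k$, contradicting the hypothesis. Hence no set with fewer than $k$ vertices is a dominant resolving set.

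The only real obstacle is a definitional one: fixing the precise meaning of "dominant resolving set" and the range of $k$. If the cited source \cite{susilowati2020} uses a different notion of dominance, I would check that it is still closed under supersets, which is the one property the argument uses. The same monotonicity argument is what the paper will later need for diametral resolving sets, where containing the diametral set is likewise preserved under taking supersets.
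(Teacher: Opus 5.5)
Your argument is correct. The restriction $k\le |V(G)|$ is a genuine correction, not pedantry: for $k>|V(G)|$ the statement as written fails, since $W=V(G)$ is vacuously dominating and trivially resolving.

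The paper gives no proof of this lemma; it is quoted from the cited source. The only point of comparison is therefore its diametral analogue, Lemma~\ref{lemma:minimum}, which takes a different and much weaker route. It strengthens the hypothesis to $\dim_{\mathrm{diam}}(G)=k$ and reads the conclusion off the definition of a minimum, never using closure under supersets. That shortcut cannot prove the cited lemma, whose hypothesis only rules out sets of size exactly $k$. There, your observation that being resolving and being dominating are both preserved by adding vertices is the essential ingredient.

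Your monotonicity argument is also what the paper actually needs later. In the main theorems, the paper shows that no set of size $|W|-1$ containing the diametral set is resolving, and then invokes Lemma~\ref{lemma:minimum} to conclude that $W$ is minimum. Taken literally, that invocation presupposes $\dim_{\mathrm{diam}}=|W|$, which is what is being proved. Your argument, transplanted to diametral resolving sets, is exactly what makes that step valid, since containing $D(G)$ and being resolving are both superset-closed. You anticipated this in your last paragraph.
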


The following result gives the metric dimension of $K_1+C_n$ and $K_1+P_n$.

\begin{lemma} \label{dimwheel-fan} \cite{javaid2008}
\begin{enumerate}
    \item[(1)] For $n\notin\{3,6\}$,  $dim(K_1+C_n)=\left\lfloor\frac{2n+2}{5}\right\rfloor;$

    \item[(2)] For $n\notin\{1,2,3,6\}$,
     $dim(K_1+P_n)=\left\lfloor\frac{2n+2}{5}\right\rfloor.$
\end{enumerate}
\end{lemma}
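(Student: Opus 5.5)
We prove part (1) by reducing the problem to a combinatorial problem on the rim cycle; part (2) is the same argument with boundary effects at the path ends.

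\textbf{Reduction.} Let $c$ be the hub of $K_1+C_n$ and $v_1,\ldots,v_n$ the rim vertices. Any two rim vertices are at distance $1$ if they are adjacent on $C_n$ and at distance $2$ otherwise, since $c$ is a common neighbour. The hub is at distance $1$ from every rim vertex. For $n\ge 7$ the hub is useless as a landmark, so we may take $W$ to be a set of rim vertices. Then for $v\notin W$ the vector $r(v\mid W)$ is determined by the \emph{trace} $N(v)\cap W$: coordinate $1$ on the trace, $2$ elsewhere. By Lemma~\ref{lemma:reprecase1}, vertices of $W$ are distinguished automatically. So $W$ resolves $K_1+C_n$ if and only if three conditions hold:
\begin{enumerate}
\item[(a)] the rim vertices outside $W$ have pairwise distinct traces;
\item[(b)] at most one rim vertex has empty trace;
\item[(c)] no rim vertex has trace equal to all of $W$, since that vertex would look like the hub.
\end{enumerate}
Condition (c) is automatic once $|W|\ge 3$.

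\textbf{Gap analysis.} Cut the cycle at the vertices of $W$ into gaps, that is, maximal runs of non-landmarks between consecutive landmarks $a,b$.
\begin{itemize}
\item A gap of length $1$ gives a vertex with trace $\{a,b\}$.
\item A gap of length $2$ gives traces $\{a\}$ and $\{b\}$.
\item A gap of length $3$ gives $\{a\}$, $\emptyset$, $\{b\}$.
\item A gap of length $\ge 4$ produces two vertices with empty trace, so it is forbidden.
\end{itemize}
The constraints then become:
\begin{itemize}
\item at most one gap has length $3$;
\item no landmark is flanked on both sides by gaps of length $\ge 2$, otherwise two vertices share the trace $\{a\}$;
\item two gaps of length $1$ between the same pair of landmarks cannot occur for $n\ge 7$.
\end{itemize}

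\textbf{Lower bound.} Assign to each landmark the gaps adjacent to it. Since each landmark sees at most one long gap, long gaps (length $2$ or $3$) and short gaps (length $\le 1$) must roughly alternate around the cycle. Summing gives $n\le |W|+\tfrac{3}{2}|W|+1$, with the $+1$ coming from the single gap of length $3$. This yields $|W|\ge \lceil (2n-2)/5\rceil=\lfloor (2n+2)/5\rfloor$. I expect this double-counting inequality, and checking that it gives exactly the floor expression for every residue of $n$ modulo $5$, to be the main obstacle. The plan is to pair each landmark with the long gap on one side and charge lengths, then treat the residues separately.

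\textbf{Upper bound.} Tile the rim with blocks $w\,x\,w\,x\,x$ of five vertices containing two landmarks. Here $w$ is a landmark and $x$ a non-landmark, so gaps of lengths $1$ and $2$ alternate. Absorb the remainder of $n$ modulo $5$ by using the one permitted gap of length $3$, or a shortened block. Then verify (a)--(c) directly from the gap rules above.

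\textbf{Exceptions and the fan.} The excluded small cases $n\in\{3,6\}$ fail either because the hub can help or because (c) bites; they are handled by direct inspection. For $K_1+P_n$ the argument is identical, except that the two end gaps have only one bounding landmark. This changes the local constraints only at the ends, and the same counting gives the same value $\lfloor (2n+2)/5\rfloor$ for $n\notin\{1,2,3,6\}$.
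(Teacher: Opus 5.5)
The paper gives no proof of this lemma. It is quoted from the literature, so your proposal is necessarily a different, self-contained route and has to stand on its own.

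\textbf{The wheel half.} This is the standard trace/gap argument, and it is sound. The lower bound is also easier than you fear.
\begin{itemize}
\item No two cyclically consecutive gaps can both be long, so at most $\lfloor k/2\rfloor$ of the $k$ gaps are long. Hence $n\le 2k+\lfloor k/2\rfloor+1$.
\item This count uses only (a) and (b). So it also bounds $|W\setminus\{c\}|$ when the hub $c$ lies in $W$, because $c$ separates no two rim vertices.
\item The identity $\lceil(2n-2)/5\rceil=\lfloor(2n+2)/5\rfloor$ is just $\lceil m/5\rceil=\lfloor(m+4)/5\rfloor$ for integers $m$. No residue analysis is needed for the lower bound.
\end{itemize}
For the upper bound you need your gap rules to be sufficient, not only necessary. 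They are once $k\ge 3$: pair traces come from distinct consecutive pairs, each singleton $\{a\}$ occurs at most once by the flanking rule, and $\emptyset$ occurs at most once. Shortening any gap preserves all the rules. So instead of juggling residues, it suffices to realise $n=2k+\lfloor k/2\rfloor+1$ for each $k\ge 3$. For instance, use gaps $1,2,\dots,1,2,1,3$ for even $k$ and $1,3,1,2,\dots,1,2,1$ for odd $k$.

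\textbf{What is genuinely missing: the fan.} Calling the fan argument ``identical except at the ends'' hides a different local analysis. At an end of the path:
\begin{itemize}
\item an end gap of length $1$ already yields the singleton trace $\{a\}$, so it counts as a long side of $a$ while contributing only one vertex;
\item an end gap of length $2$ yields $\{a\}$ and $\emptyset$, using up the unique empty trace;
\item an end gap of length $\ge 3$ is forbidden.
\end{itemize}
You must redo the count on the $k+1$ slots $e_0,g_1,\dots,g_{k-1},e_k$ along a path, with no two consecutive long slots. This gives
\[
n\le k+(k-1)+\lceil (k+1)/2\rceil+1=2k+\lfloor k/2\rfloor+1 .
\]
This coincides with the cycle bound only because the extra slot compensates for the end slots contributing one vertex less. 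That coincidence is exactly what ``the same counting'' has to prove. You also owe a fan construction, for example $x\,w\,x\,w\,x\,x\,w\,x\,w\,x\,x$ for $k=4$, $n=11$.

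\textbf{What is genuinely missing: the small cases.} Your reduction and tiling are set up for $n\ge 7$. The lemma also covers $n\in\{4,5\}$ for both graphs, where the formula gives $2$. These need a direct check. The lower bound is trivial, since only paths have dimension $1$. But your block $w\,x\,w\,x\,x$ actually fails at $n=5$: with $k=2$, the vertex in the length-$1$ gap has trace $W$ and is confused with the hub. Use two adjacent rim vertices for $K_1+C_4$, $K_1+C_5$ and $K_1+P_4$, and $\{v_2,v_3\}$ for $K_1+P_5$. The excluded values $n\in\{3,6\}$, by contrast, need no treatment at all.
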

We next recall the generalized corona operation.

\begin{definition} \cite{rodriguez2015}
Let $G$ be a graph of order $n$ with $V(G)=\{v_1,v_2,\ldots,v_n\}$ and let $\mathcal{H}=(H^1,H^2,\ldots,H^n)$ be a sequence of graphs. The generalized corona graph, denoted by
\begin{equation}
G\odot\mathcal{H}
=
G\odot(H^1,H^2,\ldots,H^n),
\end{equation}
is the graph obtained by taking one copy of $G$ and one copy of each graph $H^i$, and joining every vertex $v_i$ of $G$ to every vertex of $H_i$ for $i=1,2,\ldots,n$.
\end{definition}

For the generalized corona graph, the vertex labeling is given by $G\odot\mathcal{H}=V(G^0)\cup\bigcup_{i=1}^{n}V(H_i^i)$, where $V(G^0)=\{v_i^0\in V(G\odot H)\mid v_i\in V(G)\}$ and $V(H_i^i)=\{u_{ij}^i\mid u_{ij}\in V(H^i),\;i=1,2,3,\ldots,n\}$. Here, $G^0$ is referred to as the main graph, while each $H^i$ is referred to as a branch graph.

\section{Main Results} 

In this section, we determine the diametral metric dimension of the generalized corona graph $G\odot\mathcal{H}$ where $G$ is a connected graph of order $n$ and $\mathcal{H}=(H^1,H^2,\ldots,H^n)$ is a sequence of connected graphs. Throughout this section, $G^0$ denotes the copy of the main graph $G$ in $G\odot\mathcal{H}$, while $H_i^i$ denotes the copy of $H^i$ attached to $v_i\in V(G)$.

The first step in this study is to define the concept of the diametral metric dimension.

\begin{definition}
A diametral set is a set whose elements are all diametral vertices. The diametral set of a graph $G$ is denoted by $D(G)$.
\end{definition}

\begin{definition}
Let $G$ be a connected graph. An ordered set $W\subseteq V(G)$ is called a diametral resolving set of $G$ if $W$ is a resolving set that also contains the diametral set $D(G)$. A diametral resolving set of minimum cardinality is called a diametral basis. The cardinality of a diametral basis in $G$ is called the diametral metric dimension, denoted by $\dim_{\mathrm{diam}}(G)$.
\end{definition}

The following lemma is adapted from the argument used for dominant resolving sets in Lemma~\ref{lemma:proofminimumitydom}.

\begin{lemma} \label{lemma:minimum}
Let $G$ be a connected graph. If $\dim_{\mathrm{diam}}(G)=k$, then every set $W\subseteq V(G)$ with $|W|<k$ is not a diametral resolving set.
\end{lemma}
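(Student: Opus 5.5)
This lemma follows directly from the definition of $\dim_{\mathrm{diam}}(G)$ as a minimum, so I would argue by contradiction. Suppose $\dim_{\mathrm{diam}}(G)=k$ and there is a set $W\subseteq V(G)$ with $|W|<k$ that is a diametral resolving set. That is, $W$ resolves every pair of distinct vertices of $G$, and $D(G)\subseteq W$.

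First I would note that the family of diametral resolving sets of $G$ is nonempty, so the minimum in the definition exists. Indeed, $V(G)$ itself contains $D(G)$, and it is resolving: by Lemma~\ref{lemma:reprecase1}, distinct vertices of $W=V(G)$ have distinct representations, since $d(v,v)=0$ while $d(u,v)>0$ for $u\neq v$. Hence the diametral basis is well defined and $\dim_{\mathrm{diam}}(G)=k$ is the least cardinality among all diametral resolving sets.

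Next, the assumed set $W$ is a diametral resolving set with $|W|<k$. Then the minimum cardinality over all diametral resolving sets is at most $|W|<k$, which contradicts $\dim_{\mathrm{diam}}(G)=k$. So no set of cardinality less than $k$ can be a diametral resolving set. This mirrors the argument behind Lemma~\ref{lemma:proofminimumitydom}, with "dominant" replaced by "contains $D(G)$".

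There is no real obstacle here. The only point needing care is the well-definedness of the minimum, which the observation about $V(G)$ settles. One might instead expect to need a monotonicity property, namely that a superset of a diametral resolving set is again a diametral resolving set, so that showing no set of size exactly $k-1$ works would suffice, in the spirit of Lemma~\ref{lemma:proofminimumitydom}. That property does hold: adding vertices preserves both resolvability and the containment $D(G)\subseteq W$. However, the direct contradiction above does not need it.
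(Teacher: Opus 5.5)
Your proof is correct and is essentially the paper's argument: both reduce the claim to the fact that $\dim_{\mathrm{diam}}(G)$ is, by definition, the minimum cardinality of a diametral resolving set, so no smaller set can be one. Your extra check that $V(G)$ is itself a diametral resolving set (so the minimum exists) and your remark on monotonicity are harmless additions that the paper omits.
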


\begin{proof}
By definition, $\dim_{\mathrm{diam}}(G)$ is the minimum cardinality of a diametral resolving set of $G$. Therefore, no set $W\subseteq V(G)$ with $|W|<\dim_{\mathrm{diam}}(G)=k$ can be a diametral resolving set.
\end{proof}

The next lemma presents the diametral set of the corona operation $G\odot\mathcal{H}$ for any connected graph $G$ and a sequence of connected graphs $\mathcal{H}$.

\begin{lemma} 
Let $G$ be connected graph of order $n>1$ and $\mathcal{H}=(H^1,H^2,\ldots,H^n)$ be a sequence of graphs, then
\begin{equation}
D(G\odot\mathcal{H})=\{u_{ij}^i \in V(H_i^i)\mid v_i\in D(G)\}.
\end{equation}
\end{lemma}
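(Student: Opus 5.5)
The plan is to compute distances in $G\odot\mathcal{H}$ explicitly, derive from them the eccentricity of every vertex, and then compare these eccentricities with the diameter. We use the labeling fixed above: $v_i^0$ for vertices of $G^0$ and $u_{ij}^i$ for vertices of $H_i^i$. We assume each $H^i$ is nonempty and write $d=\operatorname{diam}(G)\ge 1$, which holds since $G$ is connected with $n>1$.

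\textbf{Distance formulas.} Every path leaving $H_i^i$ must pass through $v_i^0$, because the only edges from $H_i^i$ to the rest of the graph go to $v_i^0$. Also, $G^0$ is an isometric copy of $G$, since no shortcut through a branch can help. These two facts give:
\begin{itemize}
\item[(a)] $d(v_i^0,v_k^0)=d_G(v_i,v_k)$;
\item[(b)] $d(u_{ij}^i,v_k^0)=1+d_G(v_i,v_k)$;
\item[(c)] $d(u_{ij}^i,u_{kl}^k)=2+d_G(v_i,v_k)$ for $i\neq k$;
\item[(d)] $d(u_{ij}^i,u_{il}^i)\le 2$ for $j\neq l$, since both vertices are adjacent to $v_i^0$.
\end{itemize}

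\textbf{Eccentricities.} From (a)--(d), and because $n>1$ guarantees that some $k\neq i$ exists whose branch realizes the maximum, we get
\[
e(u_{ij}^i)=2+e_G(v_i).
\]
Here we also use $e_G(v_i)\ge 1$, so the terms $2+d_G(v_i,v_k)$ with $k\neq i$ dominate both the within-branch distances from (d) and the distances to $G^0$. Similarly,
\[
e(v_i^0)=1+e_G(v_i),
\]
where the maximum is attained at a branch vertex of a farthest $v_k$. Consequently $\operatorname{diam}(G\odot\mathcal{H})=d+2$.

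\textbf{Identifying diametral vertices.} A vertex $v_i^0$ of $G^0$ has eccentricity at most $d+1<d+2$, so it is never diametral. A branch vertex $u_{ij}^i$ is diametral if and only if $2+e_G(v_i)=d+2$, that is, if and only if $v_i\in D(G)$. This yields exactly the set in the statement.

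The main obstacle is making the distance formulas rigorous. This requires justifying that shortest paths between different branches go through $G^0$, and that within-branch distances never exceed the cross-branch ones; the latter is where the hypothesis $n>1$ is essential. The remaining steps are direct comparisons of the resulting eccentricities.
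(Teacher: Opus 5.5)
Your proof is correct and uses the same idea as the paper's: a branch vertex over a diametral vertex of $G$ attains the maximum eccentricity. It is, however, more complete. By deriving $e(v_i^0)=1+e_G(v_i)$ and $e(u_{ij}^i)=2+e_G(v_i)$ for \emph{all} $i$, you also prove the reverse inclusion: no vertex of $G^0$, and no branch vertex over a non-diametral $v_i$, is diametral. The paper covers this only by the unproved assertion $e(u)\le e(u_{ij}^i)$. Your formulas also correct the paper's stated relation $e(u_{ij}^i)=e(v_i^0)+2$, which should read $e(u_{ij}^i)=e(v_i^0)+1=e_G(v_i)+2$.
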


\begin{proof}
Suppose $G$ is a connected graph with $V(G)=\{v_i\mid i=1,2,3,\ldots,n\}$. Suppose $\mathcal{H}$ is a sequence of $n$ connected graphs, namely $\mathcal{H}=(H^1,H^2,\ldots,H^n)$ with $V(H^i)=\{u_{ij}\mid j=1,2,3,\ldots,|V(H^i)|\}$, $i=1,2,3,\ldots,n$. The vertex set of the graph $G\odot\mathcal{H}$ is $V(G\odot\mathcal{H})=V(G^0)\cup_{i=1}^{n}V(H_i^i)$ with $V(G^0)=\{v_i^0\in V(G\odot\mathcal{H})\mid v_i\in V(G)\}$ and $V(H_i^i)=\{u_{ij}^i\mid u_{ij}\in V(H^i);\,i=1,2,3,\ldots,n\}$. Suppose $v_i\in D(G)$, then $e_G(v_i)=\max\{e_G(v)\mid v\in V(G)\}$. Suppose $v_i^0\in V(G\odot\mathcal{H})$ for $v_i\in D(G)$, then for every $u_{ij}^i\in V(H_i^i)$, it follows that $e_{G\odot\mathcal{H}}(u_{ij}^i)=e_{G\odot\mathcal{H}}(v_i^0)+2$. It is known that $e_{G\odot\mathcal{H}}(u)\leq e_{G\odot\mathcal{H}}(u_{ij}^i)$, for every $u\in V(G\odot\mathcal{H})$. Therefore, $e_{G\odot\mathcal{H}}(u_{ij}^i)=\max\{e_{G\odot\mathcal{H}}(u)\mid u\in V(G\odot\mathcal{H})\}.$ Thus, $D(G\odot\mathcal{H})=\{u_{ij}^i\in V(H_i^i)\mid v_i\in D(G)\}.$
\end{proof}

\begin{lemma}
\label{lemma:representation}
Let $G$ be a connected graph and $W\subseteq V(G)$. If $x\in W$ or $y\in W$ then $r(x\mid W)\neq r(y\mid W)$.
\end{lemma}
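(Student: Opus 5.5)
The statement needs to hold for distinct vertices $x\neq y$; this is implicit, since for $x=y$ the representations coincide trivially. The plan is to exploit the fact that a vertex of $W$ has a zero coordinate in its own representation, while no other vertex does.

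By symmetry of the hypothesis, I assume without loss of generality that $x\in W$. Write $W=\{w_1,w_2,\ldots,w_k\}$ as an ordered set and let $t$ be the index with $x=w_t$. Then the $t$-th coordinate of $r(x\mid W)$ is $d_G(x,w_t)=d_G(x,x)=0$. The $t$-th coordinate of $r(y\mid W)$ is $d_G(y,w_t)=d_G(y,x)$. Since $y\neq x$, the distance is positive: a shortest $x$--$y$ path has at least one edge, and $G$ is connected, so the distance is finite. Hence $d_G(y,x)\geq 1$.

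The two representations therefore differ in the $t$-th coordinate, so $r(x\mid W)\neq r(y\mid W)$. Two cases are already covered by this one argument and need no separate treatment. If both $x,y\in W$, that is exactly Lemma~\ref{lemma:reprecase1}. If only $y\in W$, the same argument applies with the roles of $x$ and $y$ exchanged.

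There is no real obstacle here. The only points needing care are the implicit distinctness assumption $x\neq y$ and the use of positive definiteness of the graph metric, $d_G(u,v)=0$ if and only if $u=v$. I would state both explicitly in the proof.
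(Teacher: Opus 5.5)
Your proof is correct and uses the same idea as the paper: a vertex of $W$ has a $0$ in its own coordinate, while any other vertex has a positive distance in that coordinate. The paper splits into the cases $x,y\in W$ (citing Lemma~\ref{lemma:reprecase1}) and $x\in W$, $y\notin W$ (noting that $r(y\mid W)$ contains no $0$ at all); your single comparison at coordinate $t$ covers both cases at once and states explicitly the hypothesis $x\neq y$ that the lemma leaves implicit.
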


\begin{proof}
Let $W$ be a diametral resolving set of graph $G$. Take any $x\in W$ or $y\in W$, then there are two possibilities, namely (1) $x,y\in W$; (2) $x\in W$ and $y\notin W$.
\begin{enumerate}
    \item $x,y\in W$, based on Lemma~\ref{lemma:reprecase1}, it follows that $r(x\mid W)\neq r(y\mid W).$
    \item Suppose, without loss of generality, that $x\in W$ and $y\notin W$. Then, in $r(x\mid W)$ there exists an element $0$ corresponding to $x$, whereas in $r(y\mid W)$ there is no element $0$ since $y\notin W$. Thus, $ r(x\mid W)\neq r(y\mid W).$
\end{enumerate}
Therefore, it is proved that for $x\in W$ or $y\in W$, $r(x\mid W)\neq r(y\mid W)$.
\end{proof}

We first consider the case where the main graph is a path.

\subsection{$G$ is a Path Graphs}

\begin{theorem}
Let $P_n$ be a path graph with $n\geq 3$, and let $\mathcal{H}=(H^1,H^2,\ldots,H^n)$ be a sequence of star graphs or complete graphs. Then
\begin{equation}
\dim_{\mathrm{diam}}
\left(P_n\odot\mathcal{H}\right)
=
\left|D\left(P_n\odot\mathcal{H}\right)\right|
+
\sum_{i=2}^{n-1}\dim(H^i).    
\end{equation}
\end{theorem}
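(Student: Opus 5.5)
By the previous lemma, the diametral set of $P_n\odot\mathcal{H}$ is $D=V(H_1^1)\cup V(H_n^n)$, because $D(P_n)=\{v_1,v_n\}$. So $|D|=|V(H^1)|+|V(H^n)|$. The plan is to prove matching upper and lower bounds.

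\textbf{Upper bound.} For each $2\le i\le n-1$, choose $B_i\subseteq V(H_i^i)$ as follows. If $H^i=K_m$, take $m-1$ vertices. If $H^i$ is a star $K_{1,m}$ with $m\geq 2$, take $m-1$ leaves. In both cases $|B_i|=\dim(H^i)$. Put
\[
W=D\cup\bigcup_{i=2}^{n-1}B_i .
\]
To check that $W$ resolves, use Lemma~\ref{lemma:representation}, so only pairs $x,y\notin W$ need attention. For $u\in V(H_k^k)\cup\{v_k^0\}$ we have $d(u,u_{1j}^1)=d(v_k^0,v_1^0)+1+[u\neq v_1^0]$, and similarly for vertices in $H_n^n$. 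Hence the two coordinates coming from $H_1^1$ and $H_n^n$ determine the index $k$ along the path, and also whether $u$ is the spine vertex $v_k^0$ or lies in the branch $H_k^k$.

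The distances are $k$ versus $k+1$ to $H_1^1$ for $v_k^0$ versus $u\in H_k^k$ (with $v_1^0$ at distance $1$). So the sum of the two coordinates already separates the different types. It remains to separate two vertices $x,y$ that both lie in the same $H_k^k$ with $2\le k\le n-1$. Their distances to every vertex outside $H_k^k$ coincide, since every such path passes through $v_k^0$. Inside $H_k^k\cup\{v_k^0\}$ distances are $1$ or $2$ and agree with $d_{H^k}$, truncated at $2$. Because $H^k$ has diameter at most $2$, $B_k$ resolves these pairs exactly as it resolves $H^k$.

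\textbf{Lower bound.} Let $W$ be any diametral resolving set. Then $D\subseteq W$. For each $2\le i\le n-1$, I claim $|W\cap V(H_i^i)|\ge\dim(H^i)$. Suppose not. Every vertex of $W$ outside $H_i^i$ sees all of $V(H_i^i)$ at one common distance, because $v_i^0$ is a cut vertex adjacent to all of $H_i^i$. So $W\cap V(H_i^i)$ must resolve $V(H_i^i)$ using the truncated distances, which equal the distances in $H^i$ since $\operatorname{diam}(H^i)\le2$. That makes $W\cap V(H_i^i)$ a resolving set of $H^i$ of size below $\dim(H^i)$, a contradiction.

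The sets $V(H_i^i)$ are disjoint from each other and from $D$, so summing gives
\[
|W|\ge |D|+\sum_{i=2}^{n-1}\dim(H^i).
\]
Lemma~\ref{lemma:minimum} then yields equality.

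\textbf{Main obstacle.} The delicate part is the small degenerate cases, where the branch graph has very few vertices. The cases are $K_1$ (dimension $0$), $K_2$, and the star $K_{1,1}$. In these, the chosen $B_i$ may be empty, and one must check that the vertex of $H_i^i$ left outside $W$ is still distinguished from $v_i^0$. This follows from the extra $+1$ in its distance to $H_1^1$. The star conventions also need care: whether $S_m$ is $K_{1,m}$ or $K_{1,m-1}$, and $\dim(K_{1,m})=m-1$. The center together with an omitted leaf must be separated, and this works because $d(\text{center},b)=1$ whereas $d(\text{leaf},b)=2$ for a chosen leaf $b$.
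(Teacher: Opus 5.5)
Your proof is correct and follows essentially the same route as the paper: the same set $W=D\cup\bigcup_{i=2}^{n-1}B_i^i$, and the same lower-bound idea that every vertex outside $H_i^i$ is equidistant from all of $V(H_i^i)$ (through $v_i^0$), so $W\cap V(H_i^i)$ must itself resolve $H^i$; your direct count just replaces the paper's pigeonhole argument on $|T|=|W|-1$. Using the two diametral coordinates to fix the position and type of a vertex is tidier than the paper's subcases, which do not properly cover pairs such as $v_i^0$ versus the unchosen vertex of $K_q$; just fix the typo in your distance formula, where $[u\neq v_1^0]$ should read $[u\neq v_k^0]$.
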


\begin{proof}
Let $P_n$ be a path graph with $V(P_n)=\{v_i\mid i=1,2,3,\ldots,n\}$ and $\mathcal{H}$ be a sequence of $n$ star graphs or complete graphs, namely $\mathcal{H}=(H^1,H^2,\ldots,H^n)$ with $V(H^i)=\{u_{ij}\mid j=1,2,3,\ldots,|V(H^i)|\},\qquad i=1,2,3,\ldots,n.$
The vertex set of the graph $P_n\odot\mathcal{H}$ is given by $V(P_n\odot\mathcal{H})=V(P_n^0)\cup\bigcup_{i=1}^{n}V(H_i^i),$ where $V(P_n^0) = \{v_i^0\in V(P_n\odot\mathcal{H})\mid v_i\in V(P_n)\}$ and $V(H_i^i) = \{u_{ij}^i\mid u_{ij}\in V(H^i)\}, \qquad i=1,2,3,\ldots,n.$
Let $B^i$ be a basis of graph $H^i$, so that for $i=1,2,3,\ldots,n$,
\[
B^i=
\begin{cases}
\{u_{i2},u_{i3},\ldots,u_{i(m-2)},u_{i(m-1)}\},
& H^i\cong S_m,\\[4pt]
\{u_{i1},u_{i2},\ldots,u_{i(q-2)},u_{i(q-1)}\},
& H^i\cong K_q.
\end{cases}
\]
Let $B_i^i = \{u_{ij}^i\in V(P_n\odot\mathcal{H})\mid u_{ij}\in B^i\},\qquad i=2,3,\ldots,n-1.$
We choose $W=\{u_{ij}^i\in V(H_i^i)\mid v_i\in D(P_n)\}\cup\bigcup_{i=2}^{n-1}\{u_{ij}^i\in V(P_n\odot\mathcal{H})\mid u_{ij}\in B^i\}.$
Thus, $|W|=|D(P_n\odot\mathcal{H})|+\sum_{i=2}^{n-1}\dim(H^i).$
For any distinct vertices $u,v\in V(P_n\odot\mathcal{H})$ where $u\neq v$, there are three possible cases:
\begin{enumerate}
    \item $u,v\in W$;
    \item $u\in W$ and $v\in V(P_n\odot\mathcal{H})\setminus W$;
    \item $u,v\in V(P_n\odot\mathcal{H})\setminus W$.
\end{enumerate}
For cases (1) and (2), by Lemma~\ref{lemma:representation}, it is proven that $r(x\mid W)\neq r(y\mid W).$ For case (3), there are four subcases.

\noindent\textbf{\textit{Subcase 3.1:}}
$u_{ij}^i,u_{ik}^i\in V(H_i^i)$.

Since $r(u_{ij}^i\mid B_i^i) \neq r(u_{ik}^i\mid B_i^i)$ and $B_i^i\subseteq W$, then $r(u_{ij}^i\mid W) \neq r(u_{ik}^i\mid W).$

\noindent\textbf{\textit{Subcase 3.2:}}
$v_x^0,v_y^0\in V(P_n^0)$ where $x\neq y$.

Since $v_x^0,v_y^0\in V(P_n^0)$ is a path, $d(v_x^0,v_y^0)=s,\qquad 1\leq s\leq n-1.$ Suppose $u_{xj}^x\in W$. We know that $d(v_x^0,u_{xj}^x)=1,$ for $j=1,2,\ldots,|V(H^i)|$, so $d(v_y^0,u_{xj}^x) = d(v_x^0,v_y^0)+d(v_x^0,u_{xj}^x) = s+1.$ Because $d(v_y^0,u_{xj}^x)>d(v_x^0,u_{xj}^x),$ it holds that $r(v_x^0\mid W)\neq r(v_y^0\mid W).$

\noindent\textbf{\textit{Subcase 3.3:}}
$v_x^0\in V(P_n^0)$ and $u_{ij}^i\in V(H_i^i)$.

Let $u_{ik}^i\in W$, so $d(u_{ij}^i,v_i^0)=d(u_{ik}^i,v_i^0)=1$ and $d(u_{ij}^i,u_{ik}^i)\leq 2.$ Since $d(v_x^0,v_i^0)=s,\qquad 1\leq s\leq n-1,$ then $d(v_x^0,u_{ik}^i) = d(v_x^0,v_i^0)+d(u_{ik}^i,v_i^0) = s+1.$ Thus, $d(u_{ij}^i,u_{ik}^i) < d(v_x^0,u_{ik}^i).$ Hence, for $v_x^0,u_{ij}^i\in V(P_n\odot\mathcal{H})\setminus W,$ it holds that $r(v_x^0\mid W)\neq r(u_{ij}^i\mid W).$

\noindent\textbf{\textit{Subcase 3.4:}}
$u_{xj}^x\in V(H_x^x)$ and $u_{yj}^y\in V(H_y^y)$ for $x\neq y$.

Since $d(u_{xj}^x,v_x^0)=1, \qquad d(v_x^0,v_y^0)=s,\qquad 1\leq s\leq n-1$. Suppose $u_{yk}^y\in W$, we have $d(u_{yj}^y,u_{yk}^y)\leq 2$ and $d(v_y^0,u_{yj}^y)=d(v_y^0,u_{yk}^y)=1$. We know that $d(u_{xj}^x,v_y^0) = d(u_{xj}^x,v_x^0) + d(v_x^0,v_y^0) = 1+s$ and $d(u_{xj}^x,u_{yk}^y) = d(u_{xj}^x,v_y^0) + d(v_y^0,u_{yk}^y) = (1+s)+1 = 2+s.$ Thus, $d(u_{yj}^y,u_{yk}^y) < d(u_{xj}^x,u_{yk}^y).$ Hence, for $u_{xj}^x,u_{yj}^y \in V(P_n\odot\mathcal{H})\setminus W,$ it follows that $r(u_{xj}^x\mid W) \neq r(u_{yj}^y\mid W).$

Next, to prove that $W$ is a diametral resolving set with minimum cardinality, suppose $T\subseteq V(P_n\odot\mathcal{H})$ contains the diametral set with $|T|<|W|.$ Let $|T|=|W|-1.$ Then there is $i$ such that at most $|B_i^i|-1$ vertices in $H_i^i$ are elements of $T$. Let $S_i\subseteq$ is set with $|B_i^i|-1$ vertices Consequently, there are two vertices $u_{ij}^i,u_{ik}^i\in V(H_i^i),\qquad u_{ij}^i,u_{ik}^i\notin T,$ such that $r(u_{ij}^i\mid B_i^i) = r(u_{ik}^i\mid B_i^i).$ Moreover, for every vertex $z\in V(P_n\odot\mathcal H)\setminus V(H_i^i)$, every path from $z$ to a vertex of $H_i^i$ must pass through $v_i^0$. Since both $u_{ij}^i$ and $u_{ik}^i$ are adjacent to $v_i^0$, we have $d(z,u_{ij}^i) =d(z,v_i^0)+1=d(z,u_{ik}^i).$ Therefore, no vertex outside $H_i^i$ can distinguish $u_{ij}^i$ and $u_{ik}^i$. Consequently, $T$ is not a diametral resolving set. By Lemma~\ref{lemma:minimum}, $W$ is the diametral resolving set with minimum cardinality. 
Thus, $\dim_{\mathrm{diam}}(P_n\odot\mathcal{H}) = |D(P_n\odot\mathcal{H})| + \sum_{i=2}^{n-1}\dim(H^i).$
\end{proof}

\begin{theorem}
Let $P_n$ be a path graph with $n\geq 3$ and $\mathcal{H}=(H^1,H^2,\ldots,H^n)$ be a sequence of path graphs or cycle graphs for $|V(H^i)|>6$, then
\begin{equation}
\dim_{\mathrm{diam}}(P_n\odot\mathcal{H})
=
|D(P_n\odot\mathcal{H})|
+
\sum_{i=2}^{n-1}\dim(K_1+H^i).    
\end{equation}
\end{theorem}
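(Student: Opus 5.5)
We follow the scheme of the preceding theorem for star and complete branch graphs. The only change is that a local resolving set inside a path or cycle branch must now also separate the branch vertices from their attachment vertex $v_i^0$. Concretely, $H_i^i$ together with $v_i^0$ induces $K_1+H^i$, and $v_i^0$ is adjacent to every vertex of $H_i^i$. Hence distances inside the closed neighbourhood of $v_i^0$ are at most $2$, and they coincide with distances in $K_1+H^i$. By the previous lemma describing $D(P_n\odot\mathcal{H})$, the diametral set is $V(H_1^1)\cup V(H_n^n)$, since $D(P_n)=\{v_1,v_n\}$. So the forced part of any diametral resolving set is exactly $|D(P_n\odot\mathcal{H})|$ vertices, and these already lie in $W$. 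For each $i=2,\ldots,n-1$ we take a metric basis $B^i$ of $K_1+H^i$. Because $|V(H^i)|>6$, Lemma~\ref{dimwheel-fan} shows that $B^i$ can be chosen inside $V(H^i)$, avoiding the hub. Indeed, the hub has the same distance $1$ to all rim vertices, so it never helps resolve rim vertices, and a basis for $n>6$ need not use it. Let $B_i^i$ be its copy in $H_i^i$ and set
\[
W=D(P_n\odot\mathcal{H})\cup\bigcup_{i=2}^{n-1}B_i^i ,
\]
so that $|W|$ equals the claimed value.

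\textbf{Resolvability.} We split pairs $u,v$ into the same three cases as before. Pairs with $u\in W$ or $v\in W$ are handled by Lemma~\ref{lemma:representation}. For two vertices of the same $H_i^i$ with $2\le i\le n-1$, both lie in $K_1+H^i$ and are separated by $B_i^i$. For $i\in\{1,n\}$ all of $H_i^i$ lies in $W$, so there is nothing to check. The key new subcase is a pair $v_i^0$, $u_{ij}^i$. Here we use that $B_i^i$ resolves $K_1+H^i$, which includes the hub $v_i^0$; since distances in $K_1+H^i$ agree with those in $P_n\odot\mathcal{H}$, the pair is separated. The remaining cross subcases go exactly as in Subcases 3.2–3.4 of the previous proof, using a vertex of $W$ in $H_1^1$ or $H_n^n$, or in a suitable branch. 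For $v_x^0$ versus $v_y^0$ with $x<y$, the vertex $u_{1j}^1\in W$ is at distance $x$ from $v_x^0$ and $y$ from $v_y^0$. For $v_x^0$ versus $u_{yj}^y$ with $x\neq y$, these distances differ in parity along the path structure, or they are separated by a vertex of $H_1^1$ or $H_n^n$. Comparing $d(\cdot,u_{1j}^1)$ and $d(\cdot,u_{nj}^n)$ gives a clean argument: a vertex at level $x$ (in $P_n^0$) versus level $y$ (in a branch) gives the pairs $(x,\,n+1-x)$ and $(y+1,\,n+2-y)$, whose sums are $n+1$ and $n+3$, so they always differ. Pairs in different branches $H_x^x$, $H_y^y$ are handled in the same way by the first coordinate.

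\textbf{Minimality.} Suppose $T$ contains $D(P_n\odot\mathcal{H})$ and $|T|<|W|$. Then for some $2\le i\le n-1$ we have $|T\cap V(H_i^i)|<\dim(K_1+H^i)$. Every vertex outside $H_i^i\cup\{v_i^0\}$ reaches $H_i^i$ through $v_i^0$, so it has equal distance to all vertices of $H_i^i$. Hence $T$ restricted to this region acts like the set $(T\cap V(H_i^i))\cup\{v_i^0\}$ in $K_1+H^i$. Adding the hub does not reduce the dimension, since the hub is equidistant from all rim vertices. Therefore this set fails to resolve $K_1+H^i$, so two vertices of $H_i^i$ receive equal representations, and $T$ is not resolving. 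Lemma~\ref{lemma:minimum} then finishes the proof.

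The main obstacle I expect is the hub issue in the minimality step. The argument needs that outside vertices can at best mimic the hub $v_i^0$, and that the hub adds no resolving power for rim vertices. One must also check that the pairs involving the hub itself, which are not rim–rim pairs, cause no deficit. This is why the count uses $\dim(K_1+H^i)$ rather than $\dim(H^i)$.
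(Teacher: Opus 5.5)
Your construction of $W$, the case split and the local-deficit minimality argument all follow the paper's proof. Your comparison of the distance sums $n+1$ and $n+3$ to $u_{1j}^1$ and $u_{nj}^n$ is cleaner than the paper's Subcases 3.2--3.4, and it also separates $v_i^0$ from $H_i^i$. The minimality step, however, has a genuine gap, exactly at the point you flag as the main obstacle.

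From $|T\cap V(H_i^i)|<\dim(K_1+H^i)$ you infer that $(T\cap V(H_i^i))\cup\{v_i^0\}$ fails to resolve $K_1+H^i$. But that set may have exactly $\dim(K_1+H^i)$ elements, so cardinality does not rule out its being a basis. The sentence ``adding the hub does not reduce the dimension'' is precisely what has to be proved. Because $v_i^0$ is separated from $H_i^i$ for free by $u_{1j}^1$, the set $T\cap V(H_i^i)$ only needs to separate the vertices of $H^i$ from each other under $\min\{d_{H^i},2\}$. By contrast, $\dim(K_1+H^i)$ also pays for separating the hub from the rim, so you must show this extra requirement costs nothing.

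That is not automatic. In $P_6=u_1u_2\cdots u_6$ the set $\{u_2,u_4\}$ gives the distinct truncated vectors $(1,2),(0,2),(1,1),(2,0),(2,1),(2,2)$. Yet $\dim(K_1+P_6)=3$, because the hub also has vector $(1,1)$; the set $\{u_1,u_3\}$ in $C_6$ behaves the same way. Your minimality reasoning does not use $|V(H^i)|>6$. So the same reasoning would wrongly show that a middle branch $P_6$ forces three vertices into $T$, when two suffice.

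The missing step is short. Let $S\subseteq V(H^i)$ separate all vertices of $H^i$ with respect to $\min\{d_{H^i},2\}$.
\begin{enumerate}
\item Vertices of $S$ are identified by a $0$ coordinate, and all other vertices have vectors in $\{1,2\}^{|S|}$. Hence $|V(H^i)|\le |S|+2^{|S|}$, and $|V(H^i)|\ge 7$ forces $|S|\ge 3$.
\item A vertex of $H^i$ outside $S$ with the hub's all-ones vector would be adjacent to every vertex of $S$. This is impossible, since paths and cycles have maximum degree $2$.
\item Hence $S$ already resolves $K_1+H^i$, so $|S|\ge\dim(K_1+H^i)$. This is the contradiction your minimality step needs.
\end{enumerate}
The same argument, applied after deleting the hub from a basis, shows that no metric basis of $K_1+H^i$ contains the hub. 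That is what actually justifies taking $B^i\subseteq V(H^i)$; Lemma~\ref{dimwheel-fan} by itself only gives the value of $\dim(K_1+H^i)$. The paper's own minimality argument passes over this point in the same way. With this lemma inserted, your proof is complete.
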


\begin{proof}
Let $P_n$ be a path graph with $V(P_n)=\{v_i\mid i=1,2,3,\ldots,n\}$ and $\mathcal{H}$ be a sequence of $n$ path graphs or cycle graphs, namely $\mathcal{H}=(H^1,H^2,\ldots,H^n)$ with $V(H^i) = \{u_{ij}\mid j=1,2,3,\ldots,|V(H^i)|\}, \qquad i=1,2,3,\ldots,n.$ The vertex set of the graph $P_n\odot\mathcal{H}$ is given by $V(P_n\odot\mathcal{H}) = V(P_n^0)\cup\bigcup_{i=1}^{n}V(H_i^i),$ where $V(P_n^0) = \{v_i^0\in V(P_n\odot\mathcal{H})\mid v_i\in V(P_n)\}$ and $V(H_i^i) = \{u_{ij}^i\mid u_{ij}\in V(H^i)\}, \qquad i=1,2,3,\ldots,n.$
Since each $H^i$ is either a path graph or a cycle graph, by Lemma~\ref{dimwheel-fan}, a metric basis of $K_1+H^i$ can be chosen with cardinality $\dim(K_1+H^i)$, so that for $i=1,2,3,\ldots,n$, 
\[
B^i=
\begin{cases}
\{u_{i1},u_{i2},\ldots,u_{ir}\},
& H^i\cong P_m,\\[4pt]
\{u_{i1},u_{i2},\ldots,u_{is}\},
& H^i\cong C_q,
\end{cases}
\]
such that $|B^i| = \left\lfloor\frac{2m+2}{5}\right\rfloor$ or $|B^i| = \left\lfloor\frac{2q+2}{5}\right\rfloor.$ Let $B_i^i = \{u_{ij}^i\in V(P_n\odot\mathcal{H}) \mid u_{ij}\in B^i\}, \qquad i=2,3,\ldots,n-1.$ We choose $W= \{u_{ij}^i\in V(H_i^i)\mid v_i\in D(P_n)\} \cup\bigcup_{i=2}^{n-1} \{u_{ij}^i\in V(P_n\odot\mathcal{H}) \mid u_{ij}\in B^i\},$ so that $|W|=|D(P_n\odot\mathcal{H})|+\sum_{i=2}^{n-1}\dim(K_1+H^i).$
For any distinct vertices $u,v\in V(P_n\odot\mathcal{H})$ where $u\neq v$, there are three possible cases:
\begin{enumerate}
    \item $u,v\in W$;
    \item $u\in W$ and
    $v\in V(P_n\odot\mathcal{H})\setminus W$;
    \item $u,v\in V(P_n\odot\mathcal{H})\setminus W$.
\end{enumerate} For cases (1) and (2), by Lemma~\ref{lemma:representation}, it is proven that $r(x\mid W)\neq r(y\mid W).$ For case (3), there are four subcases.

\noindent\textbf{\textit{Subcase 3.1:}}
$u_{ij}^i,u_{ik}^i\in V(H_i^i)$.

Since $r(u_{ij}^i\mid B_i^i) \neq r(u_{ik}^i\mid B_i^i)$ and $B_i^i\subseteq W$, then $r(u_{ij}^i\mid W) \neq r(u_{ik}^i\mid W).$

\noindent\textbf{\textit{Subcase 3.2:}}
$v_x^0,v_y^0\in V(P_n^0)$ where $x\neq y$.

Since $v_x^0,v_y^0\in V(P_n^0)$ is a path, $d(v_x^0,v_y^0)=s,\qquad 1\leq s\leq n-1.$ Suppose $u_{xj}^x\in W$. We know that $d(v_x^0,u_{xj}^x)=1,$ for $j=1,2,\ldots,|V(H^i)|$, so $d(v_y^0,u_{xj}^x) = d(v_x^0,v_y^0) + d(v_x^0,u_{xj}^x) = s+1.$ Because $d(v_y^0,u_{xj}^x) > d(v_x^0,u_{xj}^x),$ it holds that $r(v_x^0\mid W) \neq r(v_y^0\mid W).$

\noindent\textbf{\textit{Subcase 3.3:}}
$v_x^0\in V(P_n^0)$ and $u_{ij}^i\in V(H_i^i)$.

Let $u_{ik}^i\in W$, so $d(u_{ij}^i,v_i^0) = d(u_{ik}^i,v_i^0) = 1$ and $d(u_{ij}^i,u_{ik}^i)\leq 2.$ Since $d(v_x^0,v_i^0)=s,\qquad 1\leq s\leq n-1,$ then $d(v_x^0,u_{ik}^i) = d(v_x^0,v_i^0) + d(u_{ik}^i,v_i^0) = s+1.$ Thus, $d(u_{ij}^i,u_{ik}^i) < d(v_x^0,u_{ik}^i).$ Hence, for $v_x^0,u_{ij}^i \in V(P_n\odot\mathcal{H})\setminus W,$ it holds that $r(v_x^0\mid W) \neq r(u_{ij}^i\mid W).$

\noindent\textbf{\textit{Subcase 3.4:}}
$u_{xj}^x\in V(H_x^x)$ and $u_{yj}^y\in V(H_y^y)$ for $x\neq y$.

Since $d(u_{xj}^x,v_x^0)=1, \qquad d(v_x^0,v_y^0)=s,\qquad 1\leq s\leq n-1$. Suppose $u_{yk}^y\in W$, we have $d(u_{yj}^y,u_{yk}^y)\leq 2$ and $d(v_y^0,u_{yj}^y)=d(v_y^0,u_{yk}^y)=1$. We know that $d(u_{xj}^x,v_y^0) = d(u_{xj}^x,v_x^0) + d(v_x^0,v_y^0) = 1+s$ and $d(u_{xj}^x,u_{yk}^y) = d(u_{xj}^x,v_y^0) + d(v_y^0,u_{yk}^y) = (1+s)+1 = 2+s.$ Thus, $d(u_{yj}^y,u_{yk}^y) < d(u_{xj}^x,u_{yk}^y).$ Hence, for $u_{xj}^x,u_{yj}^y \in V(P_n\odot\mathcal{H})\setminus W,$ it follows that $r(u_{xj}^x\mid W) \neq r(u_{yj}^y\mid W).$

Next, to prove that $W$ is a diametral resolving set with minimum cardinality, suppose $T\subseteq V(P_n\odot\mathcal{H})$ contains the diametral set with $|T|<|W|.$ Let $|T|=|W|-1.$ Then there is $i$ such that at most $|B_i^i|-1$ vertices in $K_1+H_i^i$ are elements of $T$. Consequently, there are two vertices $u_{ij}^i,u_{ik}^i\in V(H_i^i), \qquad u_{ij}^i,u_{ik}^i\notin T,$ such that $r(u_{ij}^i\mid B_i^i) = r(u_{ik}^i\mid B_i^i).$ Moreover, for every vertex $z\in V(P_n\odot\mathcal H)\setminus V(H_i^i)$, every path from $z$ to a vertex of $H_i^i$ must pass through $v_i^0$. Since both $u_{ij}^i$ and $u_{ik}^i$ are adjacent to $v_i^0$, we have $d(z,u_{ij}^i) =d(z,v_i^0)+1=d(z,u_{ik}^i).$ Therefore, no vertex outside $H_i^i$ can distinguish $u_{ij}^i$ and $u_{ik}^i$. Consequently, $T$ is not a diametral resolving set. By Lemma~\ref{lemma:minimum}, $W$ is the diametral resolving set with minimum cardinality. Thus, $\dim_{\mathrm{diam}}(P_n\odot\mathcal{H}) = |D(P_n\odot\mathcal{H})| + \sum_{i=2}^{n-1}\dim(K_1+H^i).$
\end{proof}

\subsection{$G$ is a Star Graphs}
\begin{theorem}
Let $S_n$ be a star graph with $n\geq 3$ and $\mathcal{H}=(H^1,H^2,\ldots,H^n)$ be a sequence of star graphs or complete graphs. Let $v_1\in S_n$ and $\deg(v_1)=n-1$, then
\begin{equation}
\dim_{\mathrm{diam}}(S_n\odot\mathcal{H}) = |D(S_n\odot\mathcal{H})| + \dim(H^1).    
\end{equation}
\end{theorem}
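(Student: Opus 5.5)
We follow the same scheme as for the path theorems: identify the diametral set, build an explicit diametral resolving set $W$, check that it resolves, and then show minimality.

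\textbf{Diametral set.} In $S_n$ with centre $v_1$, every leaf $v_i$ ($i\ge 2$) has eccentricity $2=\operatorname{diam}(S_n)$. The centre has eccentricity $1$. Hence $D(S_n)=\{v_2,\ldots,v_n\}$. By the lemma on $D(G\odot\mathcal{H})$,
\[
D(S_n\odot\mathcal{H})=\bigcup_{i=2}^{n}V(H_i^i),
\]
with eccentricity $4$ in $S_n\odot\mathcal{H}$. We also record the distances. Each $u_{ij}^i$ is at distance $1$ from $v_i^0$. For $i,k\ge 2$ with $i\ne k$ we have $d(v_i^0,v_k^0)=2$, and $d(v_1^0,v_i^0)=1$. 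Distances between branch vertices in different copies therefore equal $2$ plus the distance between their roots.

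\textbf{Construction.} For $H^1\cong S_m$ or $K_q$, let $B^1$ be the basis used in the path theorem, and let $B_1^1$ be its copy inside $H_1^1$. Set
\[
W=D(S_n\odot\mathcal{H})\cup B_1^1,
\]
so that $|W|=|D(S_n\odot\mathcal{H})|+\dim(H^1)$. Pairs with at least one vertex in $W$ are separated by Lemma~\ref{lemma:representation}. The vertices not in $W$ are the hub vertices $v_i^0$ ($1\le i\le n$) and the vertices of $V(H_1^1)\setminus B_1^1$. We treat the remaining pairs in three cases.

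\begin{enumerate}
\item \emph{Two hub vertices $v_x^0,v_y^0$.} Use a vertex of $H_x^x$ if $x\ge2$, otherwise one of $H_y^y$. Such a vertex lies at distance $1$ from its own root and at distance $\ge 2$ from the other.
\item \emph{A hub $v_x^0$ and a vertex $u_{1j}^1\notin B_1^1$.} For $x\ge 2$, the vertex $u_{1j}^1$ is at distance $3$ from every vertex of $H_2^2$. The hub $v_x^0$ is at distance $1$ from $H_x^x$ and $3$ from the other leaf copies, so a vertex of $H_x^x$ separates them. For $x=1$, compare against $H_2^2$: $v_1^0$ is at distance $2$ and $u_{1j}^1$ at distance $3$.
\item \emph{Two vertices of $H_1^1$ outside $W$.} These are separated by $B_1^1$, exactly as in Subcase 3.1 of the path theorem. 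Every outside vertex sees both through $v_1^0$, so only the internal distances within $H_1^1$ matter. These internal distances are at most $2$ and coincide with distances in $K_1+H^1$. For stars and complete graphs, a basis of $H^1$ also resolves $V(H^1)$ inside $K_1+H^1$, since the diameter of $H^1$ is at most $2$.
\end{enumerate}

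\textbf{Minimality.} Let $T$ be a diametral resolving set. Then $T$ must contain $D(S_n\odot\mathcal{H})$, which is all of the leaf copies. Suppose $T$ contained fewer than $\dim(H^1)$ vertices of $V(H_1^1)\cup\{v_1^0\}$. Every vertex outside $H_1^1$ reaches $H_1^1$ through $v_1^0$, so it gives equal distances to all of $V(H_1^1)$. The restriction of $T$ to $H_1^1$ would then have to resolve $H^1$. This is impossible with fewer than $\dim(H^1)$ vertices.

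\textbf{Main obstacle.} The delicate point is whether $v_1^0$ itself could serve as one of the resolving vertices. It could not: $v_1^0$ is adjacent to all of $H_1^1$ and so distinguishes nothing there. Hence $|T|\ge |W|$, and Lemma~\ref{lemma:minimum} gives the result.
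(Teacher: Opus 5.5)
Your proof is correct and follows essentially the same route as the paper: the same set $W=D(S_n\odot\mathcal{H})\cup B_1^1$, the same case check for the pairs outside $W$ (two hubs, a hub against a vertex of $H_1^1$, two vertices of $H_1^1$), and the same minimality idea that every vertex outside $H_1^1$, including $v_1^0$, is equidistant from all of $V(H_1^1)$. Your version is slightly tighter in two places: you use $\operatorname{diam}(H^1)\le 2$ to justify why a basis of $H^1$ still resolves $H_1^1$ inside the corona, and you bound $|T|$ for an arbitrary diametral resolving set rather than only for $|T|=|W|-1$.
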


\begin{proof}
Let $S_n$ be a star graph with $V(S_n)=\{v_i\mid i=1,2,3,\ldots,n\}$ and $\mathcal{H}$ be a sequence of $n$ star graphs or complete graphs, namely $\mathcal{H}=(H^1,H^2,\ldots,H^n)$ with $V(H^i) = \{u_{ij}\mid j=1,2,3,\ldots,|V(H^i)|\}, \qquad i=1,2,3,\ldots,n.$ The vertex set of the graph $S_n\odot\mathcal{H}$ is given by $V(S_n\odot\mathcal{H}) = V(S_n^0)\cup\bigcup_{i=1}^{n}V(H_i^i),$ where $V(S_n^0) = \{v_i^0\in V(S_n\odot\mathcal{H})\mid v_i\in V(S_n)\}$ and $V(H_i^i) = \{u_{ij}^i\mid u_{ij}\in V(H^i)\}, \qquad i=1,2,3,\ldots,n.$ Let $B^1$ be a metric basis of $H^1$, so \[B^1=
\begin{cases}
\{u_{12},u_{13},\ldots,u_{1(m-2)},u_{1(m-1)}\},
& H^1\cong S_m,\\[4pt]
\{u_{11},u_{12},\ldots,u_{1(q-2)},u_{1(q-1)}\},
& H^1\cong K_q.
\end{cases}\]
Let $B_1^1 = \{u_{1j}^1\in V(S_n\odot\mathcal{H}) \mid u_{1j}\in B^1\}.$ We choose $W= \{u_{ij}^i\in V(H_i^i)\mid v_i\in D(S_n)\} \cup \{u_{1j}^1\in V(S_n\odot\mathcal{H}) \mid u_{1j}\in B^1\},$ so that $|W| = |D(S_n\odot\mathcal{H})| + \dim(H^1).$
For any distinct vertices $u,v\in V(S_n\odot\mathcal{H})$ where $u\neq v$, there are three possible cases:
\begin{enumerate}
    \item $u,v\in W$;
    \item $u\in W$ and
    $v\in V(S_n\odot\mathcal{H})\setminus W$;
    \item $u,v\in V(S_n\odot\mathcal{H})\setminus W$.
\end{enumerate} For cases (1) and (2), by Lemma~\ref{lemma:representation}, it is proven that $r(x\mid W)\neq r(y\mid W).$ For case (3), there are three subcases.

\noindent\textbf{\textit{Subcase 3.1:}}
$u_{1j}^1,u_{1k}^1\in V(H_1^1)$.

Since $r(u_{1j}^1\mid B_1^1) \neq r(u_{1k}^1\mid B_1^1)$ and $B_1^1\subseteq W$, then $r(u_{1j}^1\mid W) \neq r(u_{1k}^1\mid W).$

\noindent\textbf{\textit{Subcase 3.2:}}
$v_x^0,v_y^0\in V(S_n^0)$ where $x\neq y$.

Since $v_x^0,v_y^0\in V(S_n^0)$, $d(v_x^0,v_y^0)=s, \qquad s\in\{1,2\}.$ Suppose $u_{xj}^x\in W$. We know that $d(v_x^0,u_{xj}^x)=1,$ for $j=1,2,\ldots,|V(H^i)|$. Therefore, $d(v_y^0,u_{xj}^x) = d(v_x^0,v_y^0) + d(v_x^0,u_{xj}^x) = s+1.$ Because $d(v_y^0,u_{xj}^x) > d(v_x^0,u_{xj}^x),$ it holds that $r(v_x^0\mid W) \neq r(v_y^0\mid W).$

\noindent\textbf{\textit{Subcase 3.3:}}
$v_x^0\in V(S_n^0)$ and $u_{1j}^1\in V(H_1^1)$.

If $x=1$, choose any $u_{zk}^z\in V(H_z^z)\subseteq W$, for $z\neq1$. Then, $d(v_1^0,u_{zk}^z)=d(v_1^0,v_z^0)+d(v_z^0,u_{zk}^z)=1+1=2,$ Whereas $d(u_{1j}^1,u_{zk}^z) =d(u_{1j}^1,v_1^0)+d(v_1^0,v_z^0) +d(v_z^0,u_{zk}^z)=1+1+1=3.$ Hence, $d(v_1^0,u_{zk}^z)\neq d(u_{1j}^1,u_{zk}^z).$

If $x\neq 1$, choose $u_{xk}^x\in V(H_x^x)\subseteq W$. Then, $d(v_x^0,u_{xk}^x)=1,$ whereas $d(u_{1j}^1,u_{xk}^x)=d(u_{1j}^1,v_1^0)+d(v_1^0,v_x^0)+d(v_x^0,u_{xk}^x)=1+1+1=3.$
Therefore, $d(v_x^0,u_{xk}^x)\neq d(u_{1j}^1,u_{xk}^x).$
Thus, $r(v_x^0\mid W)\neq r(u_{1j}^1\mid W).$

Next, to prove that $W$ is a diametral resolving set with minimum cardinality, suppose $T\subseteq V(S_n\odot\mathcal{H})$ contains the diametral set with $|T|<|W|.$ Let $|T|=|W|-1.$ Then there is $i=1$ such that at most $|B_1^1|-1$ vertices in $H_1^1$ are elements of $T$. Consequently, there are two vertices $u_{1j}^1,u_{1k}^1\in V(H_1^1),\qquad u_{1j}^1,u_{1k}^1\notin T,$ such that $r(u_{1j}^1\mid B_1^1) = r(u_{1k}^1\mid B_1^1).$ Moreover, for every vertex $z\in V(S_n\odot\mathcal H)\setminus V(H_1^1)$, every path from $z$ to a vertex of $H_1^1$ must pass through $v_1^0$. Since both $u_{1j}^1$ and $u_{1k}^1$ are adjacent to $v_1^0$, we have $d(z,u_{1j}^1) =d(z,v_1^0)+1=d(z,u_{1k}^1).$ Therefore, no vertex outside $H_1^1$ can distinguish $u_{1j}^1$ and $u_{1k}^1$. Consequently, $T$ is not a diametral resolving set. By Lemma~\ref{lemma:minimum}, $W$ is the diametral resolving set with minimum cardinality. Thus, $\dim_{\mathrm{diam}}(S_n\odot\mathcal{H}) = |D(S_n\odot\mathcal{H})| + \dim(H^1).$
\end{proof}

\begin{theorem}
Let $S_n$ be a star graph with $n\geq 3$ and $\mathcal{H}=(H^1,H^2,\ldots,H^n)$ be a sequence of path graphs or cycle graphs for $|V(H^i)|>6$. Let $v_1\in S_n$ and $\deg(v_1)=n-1$, then
\begin{equation}
\dim_{\mathrm{diam}}(S_n\odot\mathcal{H}) = |D(S_n\odot\mathcal{H})| + \dim(K_1+H^1).    
\end{equation}
\end{theorem}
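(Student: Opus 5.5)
We follow the pattern of the preceding star theorem and of the path theorem with path or cycle branches. Take $V(S_n)=\{v_1,\ldots,v_n\}$ with $v_1$ the center, so $D(S_n)=\{v_2,\ldots,v_n\}$ (each leaf has eccentricity $2=\operatorname{diam}(S_n)$). By the lemma on $D(G\odot\mathcal{H})$, we get $D(S_n\odot\mathcal{H})=\bigcup_{i=2}^{n}V(H_i^i)$. Since $|V(H^1)|>6$, Lemma~\ref{dimwheel-fan} gives a metric basis of $K_1+H^1$ with $\lfloor\frac{2m+2}{5}\rfloor$ vertices. For these sizes the hub of the wheel or fan is not needed in a basis, so we choose this basis $B^1\subseteq V(H^1)$. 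Let $B_1^1$ be its copy in $H_1^1$ and set
\[
W=\bigcup_{i=2}^{n}V(H_i^i)\cup B_1^1,
\]
so that $|W|=|D(S_n\odot\mathcal{H})|+\dim(K_1+H^1)$.

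To show $W$ is resolving, we use Lemma~\ref{lemma:representation} for any pair with at least one vertex in $W$. The remaining vertices lie in $V(S_n^0)\cup (V(H_1^1)\setminus B_1^1)$, which gives three subcases.

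\textbf{Two vertices of $H_1^1$.} The subgraph induced by $V(H_1^1)\cup\{v_1^0\}$ is isomorphic to $K_1+H^1$. Since $v_1^0$ is adjacent to all of $H_1^1$, distances between vertices of $H_1^1$ in the whole graph are $\min\{d_{H^1},2\}$, which is exactly the distance in $K_1+H^1$. Hence $B_1^1$ distinguishes them. This is also the point where the fact that $B^1$ avoids the hub matters: it guarantees $B_1^1\subseteq W$ resolves the copy of $K_1+H^1$, including $v_1^0$.

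\textbf{Two main vertices $v_x^0,v_y^0$.} These are separated by a vertex of $H_x^x\subseteq W$ when $x\neq1$ (distance $1$ versus at least $2$), or by a vertex of $H_y^y$ otherwise.

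\textbf{A main vertex $v_x^0$ and $u_{1j}^1$.} This is handled exactly as in the previous theorem. For $x=1$, a vertex of $H_z^z$ with $z\neq 1$ lies at distance $2$ from $v_1^0$ but $3$ from $u_{1j}^1$. For $x\neq1$, a vertex of $H_x^x$ lies at distance $1$ from $v_x^0$ but $3$ from $u_{1j}^1$.

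For minimality, let $T$ contain $D(S_n\odot\mathcal{H})$ with $|T|<|W|$. Then $T$ contains at most $\dim(K_1+H^1)-1$ vertices of $H_1^1$. Every vertex outside $V(H_1^1)\cup\{v_1^0\}$ reaches $H_1^1$ only through $v_1^0$, so it sees all of $H_1^1$ at the same distance. It therefore acts like $v_1^0$ for distinguishing pairs inside $H_1^1$. Thus $T\cap (V(H_1^1)\cup\{v_1^0\})$, together with the hub, would have to resolve $V(H_1^1)$ within $K_1+H^1$. Adding the hub to such a set yields a resolving set of $K_1+H^1$ of size at most $\dim(K_1+H^1)$ that contains the hub. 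For wheels and fans of order greater than $6$ the hub never helps, since it is at distance $1$ from every rim vertex, so it could be dropped, giving a resolving set smaller than $\dim(K_1+H^1)$, a contradiction. We then conclude with Lemma~\ref{lemma:minimum}.

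The main obstacle is this last step: justifying rigorously that the hub, and hence $v_1^0$ and everything outside $H_1^1$, contributes nothing to resolving rim vertices. The argument is that the hub has constant distance $1$ to every rim vertex, so it distinguishes no pair of rim vertices, and only the pair involving $v_1^0$ itself needs separate care.
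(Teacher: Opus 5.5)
Your set $W$, the three-subcase check that $W$ resolves, and the lower-bound mechanism match the paper's proof. That mechanism is: every vertex outside $V(H_1^1)\cup\{v_1^0\}$ reaches $H_1^1$ only through $v_1^0$, so only $S:=T\cap V(H_1^1)$ can separate pairs inside $H_1^1$. The gap is exactly the step you flag at the end, and the reason you give does not close it.

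From ``$S\cup\{\text{hub}\}$ resolves $K_1+H^1$'' you conclude the hub can be dropped, because it is at distance $1$ from every rim vertex. That only shows the hub separates no pair of rim vertices. Once the hub is removed, something must still separate the hub from each rim vertex. $S$ fails to do this precisely when some rim vertex $x\notin S$ is adjacent to every vertex of $S$, since then $x$ and the hub both have code $(1,\dots,1)$.

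This really happens. In $K_1+C_6$ with rim $u_1,\dots,u_6$, the set $\{u_1,u_3\}$ separates all rim pairs, and $\{u_1,u_3,\text{hub}\}$ is a basis. But $\{u_1,u_3\}$ is not resolving, since $u_2$ and the hub both get $(1,1)$; this is why $\dim(K_1+C_6)=3$. Your justification does not depend on the order of $H^1$, so it would equally ``prove'' the formula for $H^1=C_6$. There it is false: $D(S_n\odot\mathcal{H})\cup\{u_1^1,u_3^1\}$ already resolves, because $v_1^0$ is at distance $2$ from all of $D(S_n\odot\mathcal{H})$ while $u_2^1$ is at distance $3$. So the increment is $2$, not $\dim(K_1+C_6)=3$.

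The repair is short, and it is where $m=|V(H^1)|\ge 7$ must be used. Take $S=T\cap V(H_1^1)$, so $|S|\le\dim(K_1+H^1)-1$ and $S$ separates all pairs in $H_1^1$.

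\emph{First case:} no vertex of $H_1^1\setminus S$ is adjacent to all of $S$. Then $S$ itself resolves $K_1+H^1$: the hub's code is all $1$'s, and every rim vertex has a $0$ (if in $S$) or a $2$ (if not). This contradicts the definition of $\dim(K_1+H^1)$.

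\emph{Second case:} such an $x$ exists. Then $|S|\le\deg_{H^1}(x)\le 2$. The $m-|S|$ vertices of $H_1^1\setminus S$ carry pairwise distinct codes in $\{1,2\}^{|S|}$, so $m\le |S|+2^{|S|}\le 6$, again a contradiction.

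Applying the same dichotomy to $B\setminus\{\text{hub}\}$ for a basis $B$ also proves your (and the paper's) unproved assertion that a basis of $K_1+H^1$ can be chosen inside $V(H^1)$. The paper's own minimality paragraph also glosses over this point: it argues with the fixed basis $B_1^1$ rather than with $T$. So you located the real issue correctly; it just has to be settled as above.
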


\begin{proof}
Let $S_n$ be a star graph with $V(S_n)=\{v_i\mid i=1,2,3,\ldots,n\}$ and $\mathcal{H}$ be a sequence of $n$ path graphs or cycle graphs, namely $\mathcal{H}=(H^1,H^2,\ldots,H^n)$ with $V(H^i) = \{u_{ij}\mid j=1,2,3,\ldots,|V(H^i)|\},\qquad i=1,2,3,\ldots,n.$ The vertex set of the graph $S_n\odot\mathcal{H}$ is given by $V(S_n\odot\mathcal{H}) = V(S_n^0)\cup\bigcup_{i=1}^{n}V(H_i^i),$ where $V(S_n^0) = \{v_i^0\in V(S_n\odot\mathcal{H})\mid v_i\in V(S_n)\}$ and $V(H_i^i) = \{u_{ij}^i\mid u_{ij}\in V(H^i)\}, \qquad i=1,2,3,\ldots,n.$ Since $H^1$ is either a path graph or a cycle graph, by Lemma~\ref{dimwheel-fan}, a metric basis of $K_1+H^1$ can be chosen with cardinality $\dim(K_1+H^1)$, so \[
B^1=\begin{cases}
\{u_{11},u_{12},\ldots,u_{1r}\}, & H^1\cong P_m,\\[4pt]
\{u_{11},u_{12},\ldots,u_{1s}\}, & H^1\cong C_q,
\end{cases}
\] such that $|B^1| = \left\lfloor\frac{2m+2}{5}\right\rfloor$ or $|B^1| = \left\lfloor\frac{2q+2}{5}\right\rfloor.$ Let $B_1^1 = \{u_{1j}^1\in V(S_n\odot\mathcal{H}) \mid u_{1j}\in B^1\}.$ We choose $W= \{u_{ij}^i\in V(H_i^i)\mid v_i\in D(S_n)\} \cup \{u_{1j}^1\in V(S_n\odot\mathcal{H}) \mid u_{1j}\in B^1\},$ so that $|W| = |D(S_n\odot\mathcal{H})| + \dim(K_1+H^1).$ For any distinct vertices $u,v\in V(S_n\odot\mathcal{H})$ where $u\neq v$, there are three possible cases:
\begin{enumerate}
    \item $u,v\in W$;
    \item $u\in W$ and
    $v\in V(S_n\odot\mathcal{H})\setminus W$;
    \item $u,v\in V(S_n\odot\mathcal{H})\setminus W$.
\end{enumerate} For cases (1) and (2), by Lemma~\ref{lemma:representation}, it is proven that $r(x\mid W)\neq r(y\mid W).$ For case (3), there are three subcases.

\noindent\textbf{\textit{Subcase 3.1:}}
$u_{1j}^1,u_{1k}^1\in V(H_1^1)$.

Since $r(u_{1j}^1\mid B_1^1) \neq r(u_{1k}^1\mid B_1^1)$ and $B_1^1\subseteq W$, then $r(u_{1j}^1\mid W) \neq r(u_{1k}^1\mid W).$

\noindent\textbf{\textit{Subcase 3.2:}}
$v_x^0,v_y^0\in V(S_n^0)$ where $x\neq y$.

Since $v_x^0,v_y^0\in V(S_n^0)$, $d(v_x^0,v_y^0)=s, \qquad s\in\{1,2\}.$ Suppose $u_{xj}^x\in W$. We know that $d(v_x^0,u_{xj}^x)=1.$ Therefore, $d(v_y^0,u_{xj}^x) = d(v_x^0,v_y^0) + d(v_x^0,u_{xj}^x) = s+1.$ Because $d(v_y^0,u_{xj}^x) > d(v_x^0,u_{xj}^x),$ it holds that $r(v_x^0\mid W) \neq r(v_y^0\mid W).$

\noindent\textbf{\textit{Subcase 3.3:}}
$v_x^0\in V(S_n^0)$ and $u_{1j}^1\in V(H_1^1)$.

If $x=1$, choose any $u_{zk}^z\in V(H_z^z)\subseteq W$, for $z\neq1$. Then, $d(v_1^0,u_{zk}^z)=d(v_1^0,v_z^0)+d(v_z^0,u_{zk}^z)=1+1=2,$ Whereas $d(u_{1j}^1,u_{zk}^z) =d(u_{1j}^1,v_1^0)+d(v_1^0,v_z^0) +d(v_z^0,u_{zk}^z)=1+1+1=3.$ Hence, $d(v_1^0,u_{zk}^z)\neq d(u_{1j}^1,u_{zk}^z).$

If $x\neq 1$, choose $u_{xk}^x\in V(H_x^x)\subseteq W$. Then, $d(v_x^0,u_{xk}^x)=1,$ whereas $d(u_{1j}^1,u_{xk}^x)=d(u_{1j}^1,v_1^0)+d(v_1^0,v_x^0)+d(v_x^0,u_{xk}^x)=1+1+1=3.$
Therefore, $d(v_x^0,u_{xk}^x)\neq d(u_{1j}^1,u_{xk}^x).$
Thus, $r(v_x^0\mid W)\neq r(u_{1j}^1\mid W).$

Next, to prove that $W$ is a diametral resolving set with minimum cardinality, suppose $T\subseteq V(S_n\odot\mathcal{H})$ contains the diametral set with $|T|<|W|.$ Let $|T|=|W|-1.$ Then there is $i=1$ such that at most $|B_1^1|-1$ vertices in $K_1+H_1^1$ are elements of $T$. Consequently, there are two vertices $u_{1j}^1,u_{1k}^1\in V(H_1^1), \qquad u_{1j}^1,u_{1k}^1\notin T,$ such that $r(u_{1j}^1\mid B_1^1) = r(u_{1k}^1\mid B_1^1).$ Moreover, for every vertex $z\in V(S_n\odot\mathcal H)\setminus V(H_1^1)$, every path from $z$ to a vertex of $H_1^1$ must pass through $v_1^0$. Since both $u_{1j}^1$ and $u_{1k}^1$ are adjacent to $v_1^0$, we have $d(z,u_{1j}^1) =d(z,v_1^0)+1=d(z,u_{1k}^1).$ Therefore, no vertex outside $H_1^1$ can distinguish $u_{1j}^1$ and $u_{1k}^1$. Consequently, $T$ is not a diametral resolving set. By Lemma~\ref{lemma:minimum}, $W$ is the diametral resolving set with minimum cardinality. Thus, $\dim_{\mathrm{diam}}(S_n\odot\mathcal{H}) = |D(S_n\odot\mathcal{H})| + \dim(K_1+H^1).$
\end{proof}

\subsection{$G$ is a Cycle or Complete Graph}

\begin{theorem}\label{dimdiam-cyclecomplete}
Suppose that $G$ is a cycle graph or a complete graph and $\mathcal{H}=(H^1,H^2,\ldots,H^n)$ is a sequence of graphs. Then
\begin{equation}
\dim_{\mathrm{diam}} \left(G\odot\mathcal{H}\right) = \left|D\left(G\odot\mathcal{H}\right)\right|.    
\end{equation}
\end{theorem}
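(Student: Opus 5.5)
The plan is to show that when $G$ is a cycle or a complete graph, the diametral set $D(G\odot\mathcal{H})$ is already a resolving set. The lower bound is then automatic and the upper bound is witnessed by $W=D(G\odot\mathcal{H})$ itself. I assume $n\geq 3$ for $C_n$ and $n\geq 2$ for $K_n$, so that the lemma describing $D(G\odot\mathcal{H})$ applies, and that every $H^i$ is nonempty.

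\emph{Step 1 (the diametral set).} Both $C_n$ and $K_n$ are vertex-transitive, so every vertex has the same eccentricity. Hence $e_G(v_i)=\operatorname{diam}(G)$ for all $i$, i.e.\ $D(G)=V(G)$. By the lemma giving $D(G\odot\mathcal{H})=\{u_{ij}^i\in V(H_i^i)\mid v_i\in D(G)\}$, we get
\[
D(G\odot\mathcal{H})=\bigcup_{i=1}^{n}V(H_i^i).
\]
The vertices outside this set are exactly those of $G^0$.

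\emph{Step 2 (lower bound).} Every diametral resolving set contains $D(G\odot\mathcal{H})$ by definition. Therefore $\dim_{\mathrm{diam}}(G\odot\mathcal{H})\geq |D(G\odot\mathcal{H})|$, and Lemma~\ref{lemma:minimum} is not needed beyond this observation.

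\emph{Step 3 (upper bound).} Set $W=D(G\odot\mathcal{H})$. Take distinct vertices $x,y$. If $x\in W$ or $y\in W$, Lemma~\ref{lemma:representation} gives $r(x\mid W)\neq r(y\mid W)$. Otherwise $x=v_a^0$ and $y=v_b^0$ with $a\neq b$. Pick any $u_{aj}^a\in V(H_a^a)\subseteq W$. Then $d(v_a^0,u_{aj}^a)=1$. On the other hand, every path from $v_b^0$ into $H_a^a$ passes through $v_a^0$, so
\[
d(v_b^0,u_{aj}^a)=d_G(v_b,v_a)+1\geq 2.
\]
Thus the representations differ, and $W$ is a diametral resolving set of cardinality $|D(G\odot\mathcal{H})|$.

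The only delicate point is Step 1: justifying $D(G)=V(G)$, which for $C_n$ relies on every vertex having eccentricity $\lfloor n/2\rfloor$. One must also make sure the earlier lemma on $D(G\odot\mathcal{H})$ is applied within its hypotheses ($n>1$, nonempty branch graphs). The resolving argument itself is routine.
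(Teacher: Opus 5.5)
Your proof is correct and follows essentially the paper's route: take $W$ to be the branch vertices attached to diametral vertices of $G$, handle pairs meeting $W$ by Lemma~\ref{lemma:representation}, and separate two main-graph vertices $v_a^0,v_b^0$ with a branch vertex attached to $v_a^0$ (distance $1$ versus $d_G(v_a,v_b)+1$). Where you go beyond the paper is in stating what it leaves implicit: that $D(G)=V(G)$ for cycles and complete graphs, so only $G^0$ lies outside $W$; that the lower bound is immediate from the definition; and that $n\geq 2$ and nonempty branch graphs are needed for the diametral-set lemma.
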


\begin{proof}
Suppose $G$ is a cycle graph or a complete graph with $V(G)=\{v_i \mid i=1,2,3,\ldots,n\}$ and $H$ is a sequence of graphs, namely $\mathcal{H}=(H^1,H^2,\ldots,H^n),$ with $V(H^i)=\{u_{ij}\mid j=1,2,3,\ldots,|V(H^i)|\},\qquad i=1,2,3,\ldots,n.$ The vertex set of the graph $G\odot\mathcal{H}$ is given by $V(G\odot\mathcal{H}) = V(G^0)\cup\bigcup_{i=1}^{n}V(H_i^i),$ where $V(G^0) = \{v_i^0\in V(G\odot\mathcal{H})\mid v_i\in V(G)\}$ and $V(H_i^i) = \{u_{ij}^i\mid u_{ij}\in V(H^i)\}, \qquad i=1,2,3,\ldots,n.$ We choose $W= \{u_{ij}^i\in V(H_i^i)\mid v_i\in D(G)\}$ so that $|W|=|D(G\odot\mathcal{H})|.$ For any distinct vertices $u,v\in V(G\odot\mathcal{H})$ where $u\neq v$, there are three possible cases:
\begin{enumerate}
    \item $u,v\in W$;
    \item $u\in W$ and $v\in V(G\odot\mathcal{H})\setminus W$;
    \item $u,v\in V(G\odot\mathcal{H})\setminus W$.
\end{enumerate} For cases (1) and (2), by Lemma~\ref{lemma:representation}, it is proven that $r(x\mid W)\neq r(y\mid W).$ For case (3), let $u_{ik}^i\in W.$ Then $d(u_{ij}^i,v_i^0) = d(u_{ik}^i,v_i^0) = 1$ and $d(u_{ij}^i,u_{ik}^i)\leq 2.$ Since $d(v_x^0,v_y^0)=s, \qquad 1\leq s\leq \left\lfloor\frac{n}{2}\right\rfloor$ and $d(v_x^0,u_{xj}^x)=1 \qquad j=1,2,\ldots,|V(H^i)|,$ then $
d(v_y^0,u_{xj}^x) = d(v_x^0,v_y^0) + d(v_x^0,u_{xj}^x) =s+1.$ Because $d(v_y^0,u_{xj}^x) > d(v_x^0,u_{xj}^x),$ it follows that $r(v_x^0\mid W)\neq r(v_y^0\mid W).$
Because $W$ is a diametral set and also a resolving set, $W$ is a diametral basis. Thus, $\dim_{\mathrm{diam}}(G\odot\mathcal{H}) = |D(G\odot\mathcal{H})|.$
\end{proof}

\begin{example}
\label{ex:generalized-corona}

Figure~\ref{fig:c4} presents the graph $C_4$, while Figure~\ref{fig:sequence-H} presents the sequence of graphs $\mathcal{H}=\{P_3,C_3,K_4,S_4\}$. The vertex set of $C_4$ is denoted by $V(C_4)=\{v_i\mid i=1,2,3,4\},$ and the vertex set of each graph $H^i$ in $\mathcal{H}$ is denoted by $V(H^i)=\{u_{ij}\mid j=1,2,\ldots,|V(H^i)|\},\qquad i=1,2,3,4.$

\begin{figure}[H]
    \centering
    \includegraphics[width=0.26\textwidth]{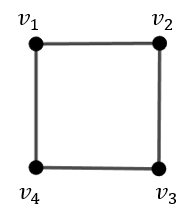}
    \caption{The graph $C_4$.}
    \label{fig:c4}
\end{figure}

\begin{figure}[H]
    \centering
    \includegraphics[width=0.96\textwidth]{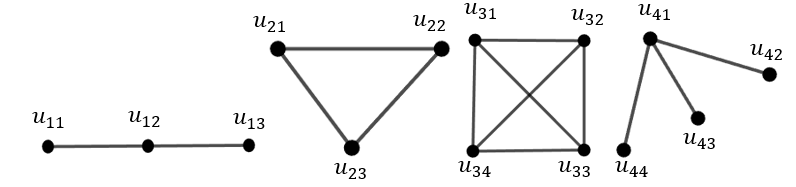}
    \caption{The sequence of graphs $\mathcal{H}=\{P_3,C_3,K_4,S_4\}$.}
    \label{fig:sequence-H}
\end{figure}

The generalized corona $C_4\odot\mathcal{H}$, is shown in Figure~\ref{fig:c4-corona}. The vertex set of $C_4\odot\mathcal{H}$ is given by $V(C_4\odot\mathcal{H}) = V(C_4^0)\cup\bigcup_{i=1}^{4}V(H_i^i),$ where $V(C_4^0) =\{v_i^0\mid v_i\in V(C_4)\},$ and $V(H_i^i) = \{u_{ij}^i\mid u_{ij}\in V(H^i),\ i=1,2,3,4\}.$

\begin{figure}[H]
    \centering
    \includegraphics[width=0.67\textwidth]{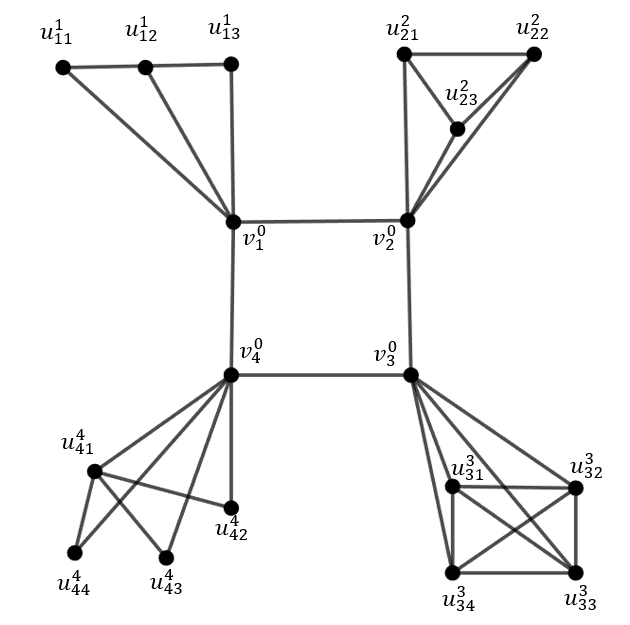}
    \caption{The generalized corona graph $C_4\odot\mathcal{H}$.}
    \label{fig:c4-corona}
\end{figure}

The diametral set of $C_4\odot\mathcal{H}$ is given by  $D(C_4\odot\mathcal{H}) = \{u_{ij}^i\in V(H_i^i)\mid i=1,2,3,4\}.$ Since $\operatorname{diam}(C_4) = \operatorname{rad}(C_4) = 2,$ it follows from Theorem~\ref{dimdiam-cyclecomplete} that $\dim_{\mathrm{diam}}(C_4\odot\mathcal{H}) = |D(C_4\odot\mathcal{H})| = 14.$ 
\end{example}

\subsection{Generalized Corona Graph}
\begin{theorem} \label{thm:3.6}
Let $G$ be a connected graph and $\mathcal{H}=(H^1,H^2,\ldots,H^n)$ be a sequence of connected graphs for $|V(H^i)|>6$, then
\begin{equation}
\dim_{\mathrm{diam}}(G\odot\mathcal{H})=
\begin{cases}
|D(G\odot\mathcal{H})|,& \operatorname{rad}(G)=\operatorname{diam}(G), \\[6pt]
|D(G\odot\mathcal{H})| +\displaystyle\sum_{\substack{i,\ v_i\notin D(G)}} \dim(H^i),& \operatorname{diam}(H^i)\leq 2, \forall i\\  
|D(G\odot\mathcal{H})| +\displaystyle\sum_{\substack{i,\ v_i\notin D(G)}} \dim(K_1+H^i), & \operatorname{diam}(H^i)> 2, \forall i.
\end{cases}    
\end{equation}
\end{theorem}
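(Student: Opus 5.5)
The plan is to treat the three cases of Theorem~\ref{thm:3.6} separately. Each case follows the pattern of the earlier theorems for paths, stars, cycles and complete graphs: build an explicit diametral resolving set $W$, check that it resolves, and then show minimality with the ``no vertex outside $H_i^i$ distinguishes two vertices of $H_i^i$'' argument, closing with Lemma~\ref{lemma:minimum}. Two structural facts are used throughout. First, by the diametral-set lemma, $D(G\odot\mathcal H)=\bigcup_{v_i\in D(G)}V(H_i^i)$, so every diametral resolving set contains the whole branch of each diametral vertex of $G$. Second, for every $z\notin V(H_i^i)$ and every $u\in V(H_i^i)$ we have $d(z,u)=d(z,v_i^0)+1$, while inside $H_i^i\cup\{v_i^0\}$ distances are those of $K_1+H^i$.

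\emph{Case $\operatorname{rad}(G)=\operatorname{diam}(G)$.} Here every vertex of $G$ is diametral, so $W=D(G\odot\mathcal H)$ already consists of all branch vertices. The remaining vertices are the $v_x^0$. Two of them, $v_x^0\neq v_y^0$, are separated by any $u\in V(H_x^x)\subseteq W$, since $d(v_x^0,u)=1<d(v_y^0,u)$. This is the argument of Theorem~\ref{dimdiam-cyclecomplete}. Minimality is immediate, because any diametral resolving set contains $D(G\odot\mathcal H)$.

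\emph{Cases $\operatorname{diam}(H^i)\le 2$ and $\operatorname{diam}(H^i)>2$.} Take
\[
W=D(G\odot\mathcal H)\cup\bigcup_{v_i\notin D(G)}B_i^i ,
\]
where $B_i^i$ is the copy of a set $B^i\subseteq V(H^i)$ with the following property: $B^i$ resolves $V(H^i)$ using distances inside $K_1+H^i$. When $\operatorname{diam}(H^i)\le 2$, distances in $H^i$ coincide with those in $K_1+H^i$ restricted to $V(H^i)$, so a metric basis of $H^i$ works and $|B^i|=\dim(H^i)$. When $\operatorname{diam}(H^i)>2$, I take a metric basis of $K_1+H^i$. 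I will argue that it can be chosen to avoid the apex, since the apex is at distance $1$ from everything and distinguishes nothing inside $V(H^i)$; this gives $|B^i|=\dim(K_1+H^i)$.

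Resolvability then runs through the same subcases 3.1--3.4 as before.
\begin{enumerate}
\item Two vertices of one branch are separated by $B_i^i$ (or are both already in $W$).
\item Two main vertices are separated by a nonempty $W\cap V(H_x^x)$. This requires $W$ to meet every branch, which holds because $|V(H^i)|>6$ and $\dim\ge1$.
\item A main vertex and a branch vertex are handled as in Subcase 3.3. A branch vertex of $H_i^i$ is at distance $\le2$ from some $w\in W\cap V(H_i^i)$, while $v_x^0$ with $x\neq i$ is at distance $d(v_x^0,v_i^0)+1$ from $w$. The only delicate subcase is $x=i$, when both are adjacent to $w$. For this I compare against a vertex of a different branch $H_j^j$: it is at distance $d(v_i^0,v_j^0)+1$ from $v_i^0$ and $d(v_i^0,v_j^0)+2$ from the branch vertex.
\item Vertices in different branches are separated as in Subcase 3.4.
\end{enumerate}

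Minimality uses the fact that $T$ must contain $D(G\odot\mathcal H)$. So if $|T|<|W|$, then for some non-diametral $v_i$ the set $T\cap V(H_i^i)$ has fewer than $|B^i|$ elements. Vertices outside $H_i^i$ see $H_i^i$ only through $v_i^0$, hence act like the apex of $K_1+H^i$. Then $(T\cap V(H_i^i))\cup\{\text{apex}\}$ would resolve $K_1+H^i$, and in fact resolve $V(H^i)$ with the apex adding nothing. This contradicts minimality of $\dim(H^i)$ (case two) or of $\dim(K_1+H^i)$ (case three).

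The main obstacle is the identification of the right local invariant. The number of vertices needed inside $H_i^i$ is exactly the minimum size of a subset of $V(H^i)$ resolving $V(H^i)$ in $K_1+H^i$. I must justify that this equals $\dim(H^i)$ when $\operatorname{diam}(H^i)\le2$, and $\dim(K_1+H^i)$ when $\operatorname{diam}(H^i)>2$. The second needs care, because a basis of $K_1+H^i$ must also separate the apex from the $H^i$-vertices. I would first try to show that, for connected $H^i$ of order $>6$, some minimum resolving subset of $V(H^i)$ leaves no vertex at distance $1$ from all its elements, so the apex is automatically separated. Failing that, I would restrict the statement to the path/cycle setting of Lemma~\ref{dimwheel-fan}, where the bases are explicit.
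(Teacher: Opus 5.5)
\textbf{The main gap.} Your argument breaks at the lower bound in the case $\operatorname{diam}(H^i)>2$, and more justification will not close it.

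What your minimality argument actually yields is a set $S=T\cap V(H_i^i)$ with $|S|<\dim(K_1+H^i)$ that resolves $V(H^i)$ in the truncated metric $\min\{d_{H^i},2\}$. Write $\dim_A(H^i)$ for the least size of such a set. Adding the apex to $S$ gives a resolving set of $K_1+H^i$ of size $|S|+1\le\dim(K_1+H^i)$, so there is no contradiction.

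In general $\dim_A(H)\le\dim(K_1+H)\le\dim_A(H)+1$. The upper value occurs exactly when every minimum such $S$ lies in $N_H(y)$ for some $y\notin S$, because then $y$ and the apex receive the same vector. Your construction goes through verbatim with a minimum such $S$ in place of a basis of $K_1+H^i$. So your framework really proves
$\dim_{\mathrm{diam}}(G\odot\mathcal H)=|D(G\odot\mathcal H)|+\sum_{v_i\notin D(G)}\dim_A(H^i)$.

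When $\operatorname{diam}(H^i)\le 2$ we have $\dim_A(H^i)=\dim(H^i)$. Your uniform treatment of that case is therefore correct, and better than the paper's subcases (2a) and (2b), which wrongly reduce to case (1). Case 3, however, is equivalent to the lemma you hoped to prove, namely $\dim_A(H^i)=\dim(K_1+H^i)$.

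\textbf{That lemma is false, even for order $>6$.} Build $H$ as follows.
\begin{itemize}
\item Take vertices $x,b_1,b_2,b_3$, and a vertex $y_A$ for each $A\subseteq\{1,2,3\}$ with $A\neq\{1,2,3\}$.
\item Join $x$ to $b_1,b_2,b_3$.
\item Join $y_A$ to $b_i$ exactly when $i\in A$.
\item Add the edge $y_\emptyset y_{\{1\}}$.
\end{itemize}
Then $H$ is connected, has order $11$, and $d_H(y_\emptyset,b_2)=4$.

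Suppose $|S|=k$ and $S$ resolves $V(H)$ truncatedly. The $11-k$ vertices outside $S$ get distinct vectors in $\{1,2\}^k$, so $k\ge 3$. The set $\{b_1,b_2,b_3\}$ works, so $\dim_A(H)=3$. For $k=3$, though, all eight vectors occur, including $(1,1,1)$. Hence every minimum $S$ is dominated by a vertex outside it, and $\dim(K_1+H)=4$.

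Now take $G=P_3$ and $H^1=H^2=H^3=H$. The set $D(P_3\odot\mathcal H)\cup\{b_1^2,b_2^2,b_3^2\}$ is a diametral resolving set. The only worrying pair, $v_2^0$ and $x^2$, agrees on the $b_i^2$ but lies at distances $2$ and $3$ from $H_1^1$. By your own localization argument, $\dim_{\mathrm{diam}}(P_3\odot\mathcal H)=22+3=25$, while case 3 predicts $26$. So case 3 of the statement cannot be proved as stated.

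The paper's proof has the same hole. It simply asserts that having fewer than $|B_i^i|$ vertices of $T$ in $H_i^i$ leaves two vertices with equal representations, so there is nothing there to borrow. Your fallback to paths and cycles changes the statement. Even there you would still need the lower bound $\dim_A(H^i)\ge\lfloor(2|V(H^i)|+2)/5\rfloor$, not just explicit bases.

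\textbf{Two smaller points.}
\begin{itemize}
\item In your item 3 with $x\neq i$: if $v_xv_i\in E(G)$ and the branch vertex is not adjacent to $w$, both distances equal $2$. Use $w'\in W\cap V(H_x^x)$ instead, which gives distances $1$ versus $d_G(v_x,v_i)+2\ge 3$.
\item A basis of $K_1+H^i$ can indeed avoid the apex, but not for the reason you give, since the apex distinguishes itself. The correct argument is a swap. If a basis contains the apex, exactly one $H$-vertex outside it is adjacent to all the other basis elements. Exchanging that vertex for the apex gives a basis avoiding the apex.
\end{itemize}
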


\begin{proof}
Let $G$ be a connected graph with $V(G)=\{v_i\mid i=1,2,3,\ldots,n\}$ and let $\mathcal{H}=(H^1,H^2,\ldots,H^n)$ be a sequence of $n$ connected graphs, where $V(H^i)=\{u_{ij}\mid j=1,2,3,\ldots,|V(H^i)|\}, \qquad i=1,2,3,\ldots,n.$ The vertex set of the graph $G\odot\mathcal{H}$ is given by $V(G\odot\mathcal{H}) = V(G^0)\cup\bigcup_{i=1}^{n}V(H_i^i),$ where $V(G^0)=\{v_i^0\in V(G\odot\mathcal{H})\mid v_i\in V(G)\}$ and $V(H_i^i) = \{u_{ij}^i\mid u_{ij}\in V(H^i)\}, \qquad i=1,2,3,\ldots,n.$ There are three possible cases to determine the diametral metric dimension of $G\odot\mathcal{H}$: (1) $\operatorname{diam}(G)=\operatorname{rad}(G)$; 
(2) $\operatorname{diam}(H^i)\leq2, \forall i$,; and 
(3) $\operatorname{diam}(H^i)>2, \forall i$.

\begin{enumerate}
\item[\textbf{(1)}] \textbf{$\operatorname{diam}(G)=\operatorname{rad}(G)$}.\\
We choose $W=\{u_{ij}^i\in V(H_i^i)\mid v_i\in D(G)\}$ so that $|W|=|D(G\odot\mathcal{H})|.$ For any distinct vertices $u,v\in V(G\odot\mathcal{H})$, there are three possible cases:
\begin{enumerate}
    \item[(1a)] $u,v\in W$;
    \item[(1b)] $u\in W$ and $v\in V(G\odot\mathcal{H})\setminus W$;
    \item[(1c)] $u,v\in V(G\odot\mathcal{H})\setminus W$.
\end{enumerate}
For cases (1a) and (1b), by Lemma~\ref{lemma:representation}, it is proven that $r(u\mid W)\neq r(v\mid W).$ For case (1c), let $u_{ik}^i\in W$. Then $d(u_{ij}^i,v_i^0) = d(u_{ik}^i,v_i^0) = 1$ and $d(u_{ij}^i,u_{ik}^i)\leq 2.$ Since $d(v_x^0,v_y^0)=s, \qquad 1\leq s\leq\left\lfloor\frac{n}{2}\right\rfloor,$ and $d(v_x^0,u_{xj}^x)=1,$ where $j=1,2,\ldots,|V(H^i)|$, then $ d(v_y^0,u_{xj}^x) = d(v_x^0,v_y^0)+d(v_x^0,u_{xj}^x) =s+1.$ Because $d(v_y^0,u_{xj}^x) > d(v_x^0,u_{xj}^x),$ it follows that $r(v_x^0\mid W)\neq r(v_y^0\mid W).$ Because $W$ is a diametral set and also a resolving set, $W$ is a diametral basis. Thus, $\dim_{\mathrm{diam}}(G\odot\mathcal{H}) = |D(G\odot\mathcal{H})|.$

\item[\textbf{(2)}] \textbf{$\operatorname{diam}(H^i)\leq2, \forall i$.}\\
The possibilities are $\operatorname{diam}(H^i)=1 \quad\text{or}\quad \operatorname{diam}(H^i)=2.$
\begin{enumerate}
    \item[(2a)] $\operatorname{diam}(H^i)=1$.
   
    The case $\operatorname{diam}(H^i)=1$ implies $\operatorname{rad}(H^i)=1$. Under this condition, the  diametral metric dimension follows the same logic as Case~(1).

    \item[(2b)] $\operatorname{diam}(H^i)=2$ and $\operatorname{rad}(H^i)=2$.

    When $\operatorname{rad}(H^i) = \operatorname{diam}(H^i) = 2,$ the diametral metric dimension follows the same logic as Case~(1).

    \item[(2c)] $\operatorname{diam}(H^i)=2$ and $\operatorname{rad}(H^i)=1$.

    In this case, the distance between any two vertices $u,v$ implies $ d_{H^i} (u,v) = d_{K_1+H^i} (u,v).$ Let $B^i$ be a basis of the graph $H^i$, where $ B^i=\{u_{is}\in V(G\odot\mathcal{H})  \mid 1\leq s\leq |V(H^i)|\},$ so that $ B_i^i= \{u_{is}^i\in V(G\odot\mathcal{H}) \mid u_{is}\in B^i,\ v_i\notin D(G)\},$ for $i=1,2,\ldots,n$. We choose $W=\{u_{ij}^i\in V(H_i^i)\mid v_i\in D(G)\}\cup \bigcup_{i=1}^{n} \{u_{is}^i\in V(G\odot\mathcal{H})\mid u_{is}\in B^i;\,v_i\notin D(G)\},$ so that $|W| = |D(G\odot\mathcal{H})| + \sum_{i,v_i \notin D(G)} \dim(H^i).$ For any distinct vertices $u,v\in V(G\odot\mathcal{H})$, there are three possible cases:
    \begin{enumerate}
        \item[(2c-i)] $u,v\in W$;
        \item[(2c-ii)] $u\in W$ and
        $v\in V(G\odot\mathcal{H})\setminus W$;
        \item[(2c-iii)] $u,v\in V(G\odot\mathcal{H})\setminus W$.
    \end{enumerate}

    For cases (2c-i) and (2c-ii), by Lemma~2.2, it is proven that $r(u\mid W)\neq r(v\mid W).$ For case (3c-iii), there are four possible subcases:
    \begin{enumerate}
        \item[(2c-iii-a)]
        $u_{ij}^i,u_{ik}^i\in V(H_i^i)$.

        Since $r(u_{ij}^i\mid B_i^i) \neq r(u_{ik}^i\mid B_i^i)$ and $B_i^i\subseteq W$, then $r(u_{ij}^i\mid W) \neq r(u_{ik}^i\mid W).$

        \item[(2c-iii-b)]
        $v_x^0,v_y^0\in V(G^0)$, where $x\neq y$.

        Let $d(v_x^0,v_y^0)=s, \qquad 1\leq s\leq n-1.$ Suppose that $u_{xj}^x\in W$. Then $d(v_x^0,u_{xj}^x)=1$ and $d(v_y^0,u_{xj}^x) = d(v_x^0,v_y^0)+d(v_x^0,u_{xj}^x) =s+1.$ Because $d(v_y^0,u_{xj}^x) > d(v_x^0,u_{xj}^x),$ it follows that $ r(v_x^0\mid W)\neq r(v_y^0\mid W).$

        \item[(2c-iii-c)]
        $v_x^0\in V(G^0)$ and $u_{ij}^i\in V(H_i^i)$.

        Let $u_{ik}^i\in W$, so $d(u_{ij}^i,v_i^0) = d(u_{ik}^i,v_i^0) = 1$ and $ d(u_{ij}^i,u_{ik}^i)\leq2.$ Since $ d(v_x^0,v_i^0)=s, 1\leq s\leq n-1,$ we obtain $d(v_x^0,u_{ik}^i)= d(v_x^0,v_i^0)+d(v_i^0,u_{ik}^i) =s+1.$ Hence, $ d(u_{ij}^i,u_{ik}^i)  < d(v_x^0,u_{ik}^i),$ and therefore $ r(v_x^0\mid W) \neq r(u_{ij}^i\mid W).$

        \item[(2c-iii-d)]
        $u_{xj}^x\in V(H_x^x)$ and $u_{yj}^y\in V(H_y^y)$, where $x\neq y$.

       Since $d(u_{xj}^x,v_x^0)=1, \qquad d(v_x^0,v_y^0)=s,\qquad 1\leq s\leq n-1$. Suppose $u_{yk}^y\in W$, We have $d(u_{yj}^y,u_{yk}^y)\leq 2$ and $d(v_y^0,u_{yj}^y)=d(v_y^0,u_{yj}^y)=1$. We know that $d(u_{xj}^x,v_y^0) = d(u_{xj}^x,v_x^0) + d(v_x^0,v_y^0) = 1+s$ and $d(u_{xj}^x,u_{yk}^y) = d(u_{xj}^x,v_y^0) + d(v_y^0,u_{yk}^y) = (1+s)+1 = 2+s.$ Thus, $d(u_{yj}^y,u_{yk}^y)  < d(u_{xj}^x,u_{yk}^y).$ Hence, for $u_{xj}^x,u_{yj}^y \in V(G\odot\mathcal{H})\setminus W,$ it follows that $r(u_{xj}^x\mid W) \neq r(u_{yj}^y\mid W).$
    \end{enumerate}

    Next, to prove that $W$ is a diametral resolving set with minimum cardinality, suppose $T\subseteq V(G\odot\mathcal{H})$ contains the diametral set with $|T|<|W|.$ Let $|T|=|W|-1.$ Then there is an index $i$ such that at most $|B_i^i|-1$ vertices in $H_i^i$ are elements of $T$. Consequently, there are two vertices  $u_{ij}^i,u_{ik}^i\in V(H_i^i)$, $u_{ij}^i,u_{ik}^i\notin T$, such that $r(u_{ij}^i\mid B_i^i) = r(u_{ik}^i\mid B_i^i).$ Moreover, for every vertex $z\in V(P_n\odot\mathcal H)\setminus V(H_i^i)$, every path from $z$ to a vertex of $H_i^i$ must pass through $v_i^0$. Since both $u_{ij}^i$ and $u_{ik}^i$ are adjacent to $v_i^0$, we have $d(z,u_{ij}^i) =d(z,v_i^0)+1=d(z,u_{ik}^i).$ Therefore, no vertex outside $H_i^i$ can distinguish $u_{ij}^i$ and $u_{ik}^i$. Consequently, $T$ is not a diametral resolving set. By Lemma~\ref{lemma:minimum}, $W$ is the diametral resolving set with minimum cardinality. Thus, $\dim_{\mathrm{diam}}(G\odot\mathcal{H}) = |D(G\odot\mathcal{H})| + \sum_{i,v_i\notin D(G)}\dim(H^i).$
\end{enumerate}

\item[\textbf{(3)}] \textbf{$\operatorname{diam}(H^i)>2, \forall i$.}\\
For $\operatorname{diam}(H^i)>2$, the distance between any two vertices $u,v\in V(H^i)$ is $d_{H^i}(u,v)\geq d_{G\odot\mathcal{H}}(u,v).$ Therefore, it is necessary to see $dim(K_1+H^i)$ rather that $dim(H^i)$. Let $B^i$ be a basis of the graph $K_1+H^i$ and let $B_i^i= \{u_{is}^i\in V(G\odot\mathcal{H}) \mid u_{is}\in B^i,\ v_i\notin D(G)\},$ for $i=1,2,\ldots,n$. We choose $W=\{u_{ij}^i\in V(H_i^i)\mid v_i\in D(G)\}\cup\bigcup_{i=1}^{n}\{u_{is}^i\in V(G\odot\mathcal{H})\mid u_{is}\in B^i;\,v_i\notin D(G)\},$ so that $|W|=|D(G\odot\mathcal{H})|+\sum_{i,v_i\notin D(G)}\dim(K_1+H^i).$ For any distinct vertices $u,v\in V(G\odot\mathcal{H})$, there are three possible cases:
\begin{enumerate}
    \item[(3a)] $u,v\in W$;
    \item[(3b)] $u\in W$ and $v\in V(G\odot\mathcal{H})\setminus W$;
    \item[(3c)] $u,v\in V(G\odot\mathcal{H})\setminus W$.
\end{enumerate} For cases (3a) and (3b), by Lemma~2.2, it is proven that $r(u\mid W)\neq r(v\mid W).$ For case (3c), there are four subcases:
\begin{enumerate}
    \item[(3c-i)] $u_{ij}^i,u_{ik}^i\in V(H_i^i)$.
   
    Since $r(u_{ij}^i\mid B_i^i) \neq r(u_{ik}^i\mid B_i^i)$ and $B_i^i\subseteq W$, then $r(u_{ij}^i\mid W) \neq r(u_{ik}^i\mid W).$

    \item[(3c-ii)] $v_x^0,v_y^0\in V(G^0)$, where $x\neq y$.

    Let $d(v_x^0,v_y^0)=s, \qquad 1\leq s\leq n-1.$  Suppose that $u_{xj}^x\in W$. We know that $d(v_x^0,u_{xj}^x)=1.$ Then $d(v_y^0,u_{xj}^x) = d(v_x^0,v_y^0)+d(v_x^0,u_{xj}^x) =s+1.$ Because $d(v_y^0,u_{xj}^x) >    d(v_x^0,u_{xj}^x),$ it holds that $r(v_x^0\mid W)\neq r(v_y^0\mid W).$

    \item[(3c-iii)] $v_x^0\in V(G^0)$ and $u_{ij}^i\in V(H_i^i)$.

    Let $u_{ik}^i\in W$, so $d(u_{ij}^i,v_i^0) = d(u_{ik}^i,v_i^0) = 1$ and $d(u_{ij}^i,u_{ik}^i)\leq2.$ Since $d(v_x^0,v_i^0)=s, \qquad 1\leq s\leq n-1,$ and $ d(u_{ij}^i,v_i^0)=1,$ then $d(v_x^0,u_{ik}^i) = d(v_x^0,v_i^0)+d(v_i^0,u_{ik}^i) =s+1.$ Hence, $ d(u_{ij}^i,u_{ik}^i) < d(v_x^0,u_{ik}^i),$ and therefore $ r(v_x^0\mid W)\neq r(u_{ij}^i\mid W).$

    \item[(3c-iv)] $u_{xj}^x\in V(H_x^x)$ and $u_{yj}^y\in V(H_y^y)$, where $x\neq y$.

    Since $d(u_{xj}^x,v_x^0)=1, \qquad d(v_x^0,v_y^0)=s,\qquad 1\leq s\leq n-1$. Suppose $u_{yk}^y\in W$, We have $d(u_{yj}^y,u_{yk}^y)\leq 2$ and $d(v_y^0,u_{yj}^y)=d(v_y^0,u_{yj}^y)=1$. We know that $d(u_{xj}^x,v_y^0) = d(u_{xj}^x,v_x^0) + d(v_x^0,v_y^0) = 1+s$ and $d(u_{xj}^x,u_{yk}^y) = d(u_{xj}^x,v_y^0) + d(v_y^0,u_{yk}^y) = (1+s)+1 = 2+s.$ Thus, $d(u_{yj}^y,u_{yk}^y) < d(u_{xj}^x,u_{yk}^y).$ Hence, for $u_{xj}^x,u_{yj}^y \in V(G\odot\mathcal{H})\setminus W,$ it follows that $r(u_{xj}^x\mid W) \neq r(u_{yj}^y\mid W).$
\end{enumerate}

Next, to prove that $W$ is a diametral resolving set with minimum cardinality, suppose $T\subseteq V(G\odot\mathcal{H})$ contains the diametral set with $|T|<|W|.$ Let $|T|=|W|-1.$ Then there is an index $i$ such that at most $|B_i^i|-1$ vertices in $K_1+H_i^i$ are elements of $T$. Consequently, there are two vertices $u_{ij}^i,u_{ik}^i\in V(H_i^i)$, $u_{ij}^i,u_{ik}^i\notin T$, such that $r(u_{ij}^i\mid B_i^i) = r(u_{ik}^i\mid B_i^i).$ Moreover, for every vertex $z\in V(P_n\odot\mathcal H)\setminus V(H_i^i)$, every path from $z$ to a vertex of $H_i^i$ must pass through $v_i^0$. Since both $u_{ij}^i$ and $u_{ik}^i$ are adjacent to $v_i^0$, we have $d(z,u_{ij}^i) =d(z,v_i^0)+1=d(z,u_{ik}^i).$ Therefore, no vertex outside $H_i^i$ can distinguish $u_{ij}^i$ and $u_{ik}^i$. Consequently, $T$ is not a diametral resolving set. By Lemma~\ref{lemma:minimum}, $W$ is the diametral resolving set with minimum cardinality. Thus, $\dim_{\mathrm{diam}}(G\odot\mathcal{H}) = |D(G\odot\mathcal{H})| + \sum_{i,v_i\notin D(G)}\dim(K_1+H^i).$ 
\end{enumerate} 
\end{proof}

As a consequence of Theorem~\ref{thm:3.6}, we obtain the following corollary.
\begin{corollary}
The value of $\dim_{\mathrm{diam}}(G\odot\mathcal{H})$ does not depend on the position of $H^i$ for every $v_i\notin D(G)$.
\end{corollary}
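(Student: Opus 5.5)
The corollary should be read off directly from the closed formula in Theorem~\ref{thm:3.6}. The plan is to show that each of the three expressions is invariant under any relabelling of the branch graphs attached to the non-diametral vertices. Fix $G$ and a sequence $\mathcal{H}$, and let $\sigma$ be any permutation of the index set $\{i \mid v_i\notin D(G)\}$ (extended by the identity on indices with $v_i\in D(G)$). Let $\mathcal{H}_\sigma$ be the sequence in which $H^{\sigma(i)}$ is attached to $v_i$. The goal is to prove $\dim_{\mathrm{diam}}(G\odot\mathcal{H})=\dim_{\mathrm{diam}}(G\odot\mathcal{H}_\sigma)$.

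First, the diametral part is unchanged. By the lemma describing $D(G\odot\mathcal{H})$, the diametral set consists exactly of the vertices of the copies $H_i^i$ with $v_i\in D(G)$. Since $\sigma$ fixes those indices, the same branch graphs sit on the diametral vertices, so $|D(G\odot\mathcal{H})|=\sum_{v_i\in D(G)}|V(H^i)|=|D(G\odot\mathcal{H}_\sigma)|$. Second, the case distinction of Theorem~\ref{thm:3.6} is unchanged. The conditions $\operatorname{rad}(G)=\operatorname{diam}(G)$, ``$\operatorname{diam}(H^i)\le 2$ for all $i$'' and ``$\operatorname{diam}(H^i)>2$ for all $i$'' depend only on $G$ and on the multiset $\{H^i\}$, not on which $v_i$ carries which $H^i$. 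Hence $G\odot\mathcal{H}$ and $G\odot\mathcal{H}_\sigma$ fall into the same case.

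Third, the correction term is unchanged. In the first case there is no correction term. In the other two cases the term is $\sum_{v_i\notin D(G)}\dim(H^i)$, respectively $\sum_{v_i\notin D(G)}\dim(K_1+H^i)$. Both are sums over the multiset of graphs attached to non-diametral vertices, and that multiset is preserved by $\sigma$. By commutativity of addition the sums agree, which gives the claim.

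The main obstacle is conceptual rather than computational. The corollary inherits whatever validity Theorem~\ref{thm:3.6} has. In particular, it relies on the theorem's implicit claim that the resolving part contributed at each non-diametral $v_i$ is determined by $H^i$ alone, via $\dim(H^i)$ or $\dim(K_1+H^i)$, independently of where $v_i$ sits in $G$. I would stress this point in the write-up: the localization argument (every path from outside $H_i^i$ enters through $v_i^0$) is what makes each branch's contribution position-free. Given that localization, the argument above is complete.
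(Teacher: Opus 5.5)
Your proposal is correct and takes the same route as the paper, which states the corollary simply as a consequence of Theorem~\ref{thm:3.6}. Your check that $|D(G\odot\mathcal{H})|$, the applicable case, and the correction sum over $\{i \mid v_i\notin D(G)\}$ are all invariant under permuting the branch graphs at non-diametral vertices is exactly the unwritten step. As in the paper, the argument only covers sequences $\mathcal{H}$ that fall into one of the three cases of Theorem~\ref{thm:3.6}; a sequence mixing branch graphs of diameter at most $2$ and greater than $2$ is not addressed. Your closing remark rightly identifies the underlying reason for position-independence: vertices of $H_i^i$ can only be separated by vertices inside $H_i^i$.
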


\section{Conclusions}
The concept of diametral metric dimension is introduced to extend the notion of metric dimension by requiring a resolving set to contain all diametral vertices of the graph. This study investigated the diametral metric dimension of the generalized corona graph $G\odot\mathcal{H}$, where $G$ is a connected graph and $\mathcal{H}=(H^1,H^2,\ldots,H^n)$ is a sequence of connected graphs. The results show that the diametral metric dimension is influenced by the diameter of the main graph $G$, and the metric dimensions of the attached graphs $H^i$ or $K_1+H^i$, depends on graph $H^i$ contains the dominant vertex. The diametral set identifies the vertices that are most distant in the network, while the attached graphs contribute to the resolving requirements of the resulting generalized corona graph. In the context of network planning, these findings provide theoretical insights into how different local structures attached to the vertices of a main network can affect the identification and resolution of remote locations. Further studies could explore other families of graphs in the sequence $\mathcal{H}$ and investigate the diametral metric dimension under other graph operations.

\subsection*{Acknowledgement}
The research was supported by Airlangga Research Fund (ARF) year 2026 Contract No. 1819/B/DST/UN3.DRI/PT.01.03/2026.

\vspace{0.3cm}

\end{document}